\documentclass[11pt]{amsart}
\usepackage[margin=1in]{geometry}
\usepackage{amssymb}
\usepackage{amsthm}
\usepackage{amsmath}
\usepackage{mathrsfs}
\usepackage{amsbsy}
\usepackage{bm}
\usepackage{hyperref}
\usepackage{tikz}
\usepackage{array}
\usepackage{enumerate}
\usepackage{enumitem}
\usepackage{bbm}
\usepackage{comment}
\usepackage{mathtools}
\usepackage{makecell} 
\usepackage{colortbl}
\usepackage{xcolor}
\usepackage{graphicx}

\DeclareFontFamily{U}{mathx}{}
\DeclareFontShape{U}{mathx}{m}{n}{<-> mathx10}{}
\DeclareSymbolFont{mathx}{U}{mathx}{m}{n}
\DeclareMathAccent{\widecheck}{0}{mathx}{"71}

\definecolor{DangerousAxiomBlue}{RGB}{46,56,255}
\definecolor{DarkAxiomBlue}{RGB}{107,131,255}
\definecolor{AxiomBlue}{RGB}{153,187,255}
\definecolor{SafetyAxiomBlue}{RGB}{194,214,255}
\definecolor{DarkBlue}{RGB}{0,0,230}

\hypersetup{colorlinks=true, citecolor=AxiomBlue, linkcolor=DangerousAxiomBlue,urlcolor=DangerousAxiomBlue}

\usepackage{hhline}
\allowdisplaybreaks
\usepackage[noadjust]{cite}

\usepackage{caption}
\usepackage[noabbrev,capitalise,nameinlink]{cleveref}
\crefname{conjecture}{Conjecture}{Conjectures}
 
\newtheorem*{conjecture*}{Conjecture}

\newtheorem{theorem}{Theorem}[section]
\newtheorem{proposition}[theorem]{Proposition}

\newtheorem{lemma}[theorem]{Lemma}

\newtheorem{maintheorem}{Theorem}

\crefname{maintheorem}{Theorem}{Theorems}

\theoremstyle{definition}

\newtheorem{remark}[theorem]{Remark}

\usepackage{etoolbox}

\newcommand{\includeSymbol}[1]{\ensuremath{%
	\mathchoice
		{\raisebox{-.4mm}{\includegraphics[height=2.1ex]{#1}}}	
		{\raisebox{-.4mm}{\includegraphics[height=2.1ex]{#1}}}
		{\raisebox{-.3mm}{\includegraphics[height=1.6ex]{#1}}}
		{\raisebox{-.2mm}{\includegraphics[height=1ex]{#1}}}
}} 

\robustify{\includeSymbol}

\newcommand{\blam}{\boldsymbol{\lambda}}

\newcommand{\dfn}[1]{\textcolor{DarkAxiomBlue}{\emph{#1}}}

\begin{document}

\title[Proof of the Lyons--White Conjecture]{Proof of the Lyons--White Conjecture}
\subjclass[2010]{}

\author[]{Colin Defant}
\address[]{Axiom Math, 124 University Avenue, Palo Alto, CA 94301, USA}
\email{colin@axiommath.ai} 

\author[]{Ken Ono}
\address[]{Axiom Math, 124 University Avenue, Palo Alto, CA 94301, USA}
\email{ken@axiommath.ai}

\begin{abstract}
Let $D_n$ be the dihedral group of order $2n$. Consider a continuous-time random walk on $D_n$ driven by arbitrary symmetric rates whose support generates $D_n$. For $p\in[1,\infty]$, we say the pair $(D_n,p)$ is \emph{rate-monotonic} if for each fixed time $t$, the $\ell^p$-distance between the random walk's distribution at time $t$ and the uniform distribution is monotonically decreasing as a function of the rates. Lyons and White proved that $(D_n,2)$ and $(D_n,\infty)$ are rate-monotonic. Somewhat counterintuitively, they found several pairs $(D_n,p)$ with ${p\in[1,1.997]\cup[2.001,3.999]\cup[4.001,5.995]}$ that are not rate-monotonic, and they asked whether any such pairs exist with $p=4$ or $p=6$. We resolve their question, proving that $(D_n,2m)$ is rate-monotonic for all positive integers $m$ and $n$. In fact, we prove a generalization of this result to a broader family of groups that includes generalized dihedral groups, dicyclic groups, and generalized quaternion groups. In the other direction, we prove that for every real $p\geq 1$ that is not an even integer, there exists a positive integer $n$ such that $(D_n,p)$ is not rate-monotonic. 
The results of this paper were formally verified in Lean by AxiomProver assuming standard literature.     
\end{abstract}

\maketitle

\section{Introduction}
\subsection{Background}
Fix a finite group $G$. Let $\blam=(\lambda_s)_{s\in G}\in\mathbb R_{\geq 0}^G$ be a collection of nonnegative rates. We consider a natural continuous-time random walk ${\bf X}^{\blam}=(X_t^{\blam})_{t\geq 0}$ on $G$ driven by $\blam$. To each $s\in G$, we associate a Poisson process $T_s\subseteq\mathbb R_{\geq 0}$ with rate $\lambda_s$; the Poisson processes associated to different elements of $G$ are independent. We view $T_s$ as the set of times when a random clock associated to $s$ rings. The random walk starts at the identity element $e$ of $G$ at time $0$. Whenever the clock associated to $s$ rings, the walk transitions by multiplying its current state by $s$ on the right. In other words, if $t\in T_s$, then for all sufficiently small $\epsilon>0$, we have $X^{\blam}_{t+\epsilon}=X^{\blam}_ts$. Since $G$ is finite, the total rate $\sum_{s\in G}\lambda_s$ is finite, so there are almost surely only finitely many clock rings in every bounded time interval. Moreover, with probability $1$, no two distinct clocks ring at the same time, so these transitions determine the random walk uniquely almost surely. 

We will assume throughout the article that the collection $\blam$ is \dfn{symmetric}, meaning $\lambda_s=\lambda_{s^{-1}}$ for all $s\in G$. The \dfn{support} of $\blam$ is $\{s\in G:\lambda_s>0\}$. We will also assume that $\blam$ is \dfn{generating}, meaning the support of $\blam$ generates $G$; under this assumption, the random walk ${\bf X}^{\blam}$ is irreducible and has the uniform distribution as its unique stationary distribution. 

For $t\in\mathbb{R}_{\geq 0}$ and $g\in G$, let \[P^{\blam}_t(g)=\mathbb{P}(X_t^{\blam}=g),\] where $\mathbb{P}$ denotes probability. For $1\leq p<\infty$, let \[d^{\blam}_p(t)=\left(\sum_{g\in G}\left|P^{\blam}_t(g)-\frac{1}{|G|}\right|^p\right)^{1/p}.\] Let \[d^{\blam}_\infty(t)=\max_{g\in G}\left|P^{\blam}_t(g)-\frac{1}{|G|}\right|.\] Thus, $d^{\blam}_p(t)$ is the $\ell^p$-distance to stationarity of the random walk ${\bf X}^{\blam}$ at time $t$.  

One might expect that for fixed $p$ and $t$, the quantity $d_p^{\blam}(t)$ is monotonically decreasing as one increases the rates in $\blam$. This, however, is not always the case. Indeed, if we increase just the rates $\lambda_s$ and $\lambda_{s^{-1}}$ for some $s\in G$, then we will increase the frequency at which the random walk multiplies by $s$, but we will also increase the frequency at which it multiplies by $s^{-1}$. 

For $\blam=(\lambda_s)_{s\in G}$ and $\blam'=(\lambda_s')_{s\in G}$ in $\mathbb R_{\geq 0}^G$, let us write $\blam\leq\blam'$ if $\lambda_s\leq\lambda_s'$ for all $s\in G$. Lyons and White \cite{LW} were interested in understanding when the random walk does in fact display the aforementioned monotonicity in the rates. They gave special attention to Coxeter groups since they could leverage the combinatorial properties such groups possess. Namely, they used the Lifting Property of the Bruhat order \cite[Proposition~2.2.7]{BB} to prove the following result. 

\begin{theorem}[{\cite{LW}}]\label{thm:LW_Coxeter}
Let $G$ be a finite Coxeter group. Let $\blam,\blam'\in\mathbb R_{\geq 0}$ be symmetric rate collections such that $\blam\leq\blam'$, and assume the support of each of $\blam$ and $\blam'$ is the set of Coxeter generators of $G$. Then 
\[d_p^{\blam}(t)\geq d_p^{\blam'}(t)\] for all $1\leq p \leq \infty$ and all $t\geq 0$. 
\end{theorem}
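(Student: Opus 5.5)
The plan is to reduce to increasing a single rate, differentiate in that rate, and show the derivative of a convex functional of $P_t$ is nonpositive. Only the sign of $d_p$'s change matters, so I work with $\Phi_p(\mu)=\sum_{g}\varphi(\mu(g))$ for $\varphi(x)=|x-1/|G||^p$ with $1\le p<\infty$. The case $p=\infty$ follows by letting $p\to\infty$ in $d_p^{\blam}(t)\ge d_p^{\blam'}(t)$. Since $\blam\le\blam'$ and both are supported on the Coxeter generators $S$, I interpolate linearly between $\blam$ and $\blam'$, one generator at a time. Each $s\in S$ is an involution, so symmetry is automatic. It therefore suffices to show $\frac{\partial}{\partial\lambda_s}\Phi_p(P_t^{\blam})\le 0$ for each $s\in S$. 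For $1<p<\infty$, $\varphi$ is $C^1$; for $p=1$ I would argue with $\varphi_\epsilon(x)=\sqrt{(x-1/|G|)^2+\epsilon^2}$ and let $\epsilon\to 0$.

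\textbf{Step 1 (variation formula).} I write $P_t=e^{tL}$ acting by convolution, where $L=\sum_{s\in S}\lambda_s(\delta_s-\delta_e)$ and multiplication is convolution in the group algebra. Duhamel's formula then gives
\[\frac{\partial}{\partial\lambda_s}P_t=\int_0^t P_u*(\delta_s-\delta_e)*P_{t-u}\,du.\]
Hence $\frac{\partial}{\partial\lambda_s}\Phi_p(P_t)=\int_0^t\sum_g\varphi'(P_t(g))\,[P_u*(\delta_s-\delta_e)*P_{t-u}](g)\,du$, and it suffices to show the integrand is $\le 0$ for each $u\in[0,t]$.

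\textbf{Step 2 (Bruhat monotonicity).} Next I establish the key structural fact: for every $\blam$ supported on $S$, every $u\ge0$, every $w\in W$ and every $r\in S$ with $wr>w$ in Bruhat order, we have $P_u(w)\ge P_u(wr)$. By the symmetry $P_u(g)=P_u(g^{-1})$, the same holds on the left.

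\emph{Proof of Step 2.} I expand $P_u(w)$ as a sum over words in $S$ weighted by Poisson probabilities, and build a weight-preserving injection from words with product $wr$ to words with product $w$. Scanning a word for $wr$ from the right, the subword property and the Lifting Property \cite[Proposition~2.2.7]{BB} give a position where deleting or inserting the letter $r$ yields a word for $w$. I want this map to be injective and to preserve the number of each letter. If the letter-count bookkeeping fails, the fallback is an ODE argument: show that the cone $\{\mu: \mu(w)\ge\mu(wr)\text{ whenever } wr>w\}$ is invariant under $\mu\mapsto\mu*(\delta_{s'}-\delta_e)$ modulo the dissipative term. This is exactly where the Lifting Property is used.

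\textbf{Step 3 (sign of the integrand).} I pair $h$ with $hs$ in $P_u*(\delta_s-\delta_e)$, which gives
\[P_u*(\delta_s-\delta_e)=\sum_{h<hs}\bigl(P_u(h)-P_u(hs)\bigr)(\delta_{hs}-\delta_h).\]
By Step 2 every coefficient is $\ge 0$. So it suffices to prove, for each $h$ with $h<hs$, that
\[\sum_k P_{t-u}(k)\bigl(\varphi'(P_t(hsk))-\varphi'(P_t(hk))\bigr)\le 0.\]
Since $\varphi'$ is nondecreasing, I would get this from a pairing: split the $k$'s via the Lifting Property into pairs, or fixed points, for which $hsk$ lies below $hk$ in a sense that makes $P_t(hsk)\le P_t(hk)$ by Step 2. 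Where the lifting goes the other way, I regroup $k$ with $sk$ and use $P_{t-u}(sk)\le P_{t-u}(k)$ when $sk>k$ to absorb the discrepancy.

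This last combinatorial bookkeeping is the step I expect to be the main obstacle. Bruhat comparisons of $hk$ versus $hsk$ are not uniform in $k$, so the sign must come from a clever matching rather than termwise. If the pairing resists, I would first try to prove directly the stronger statement that $P_t^{\blam'}$ is majorized by $P_t^{\blam}$. I would do this via a Trotter splitting $e^{t(L+\delta(\delta_s-\delta_e))}$, in which each inserted factor is a T-transform $(1-q)\mu+q\,\mu*\delta_s$. Step 2 guarantees that each such factor averages a larger entry with a smaller one, and I would check that the interleaved $e^{\tau L}$ factors preserve the Bruhat-ordered cone.
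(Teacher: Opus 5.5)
The paper quotes this theorem from Lyons--White without proving it, so I am judging your proposal on its own. Your reduction is sound: Duhamel's formula plus pairing $h$ with $hs$ correctly reduces everything to showing, for each $h<hs$,
\[\sum_k P_{t-u}(k)\bigl(\varphi'(P_t(hsk))-\varphi'(P_t(hk))\bigr)\le 0 .\]
This inequality is the whole content of the theorem, and you do not prove it. Moreover, the tools you set up cannot prove it as stated.

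\emph{Step 2 is too weak.} The elements $hk$ and $hsk$ differ by left multiplication by the reflection $hsh^{-1}$, which is usually not simple. Comparing $P_t(hk)$ with $P_t(hsk)$ therefore needs monotonicity of $P_t$ along the full Bruhat order. Your Step 2 only gives $P_u(w)\ge P_u(wr)$ for simple $r$, i.e.\ monotonicity along the weak order. For example, in $S_4$ take $h=s_1$, $s=s_2$, $k=s_3$. Then $h<hs$ and $k<sk$, and $hk=s_1s_3<s_1s_2s_3=hsk$ in Bruhat order, but these two elements are incomparable in the two-sided weak order.

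\emph{Step 2's proof also fails as written.} Inserting or deleting a letter changes the Poisson weight of a word. Your fallback cone $\{\mu:\mu(w)\ge\mu(wr)\text{ whenever }wr>w\}$ is not invariant under lazy steps. In $S_3$, $\mu=\mathbf 1_{\{e,s_1,s_1s_2\}}$ lies in this cone, but $\mu'=q\mu+(1-q)\mu*\delta_{s_2}$ has $\mu'(e)=q<1=\mu'(s_1)$.

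\emph{The Trotter fallback is also insufficient.} Cone invariance alone does not let you carry the comparison $\mu B\prec\mu$ through later factors, because a doubly stochastic map need not preserve majorization. You would have to track the masses $\mu(I)$ of Bruhat down-sets $I$ instead.

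Here is what closes the gap.

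\emph{Full Bruhat monotonicity.} Prove that $P_u(v)\ge P_u(w)$ whenever $v\le w$ in Bruhat order. The Lifting Property gives $\min(v,vr)\le\min(w,wr)$ and $\max(v,vr)\le\max(w,wr)$ for $v\le w$. Hence the cone of Bruhat-decreasing functions is invariant under $\mu\mapsto q\mu+(1-q)\mu*\delta_r$ for $q\ge 1/2$. The only case that is not termwise is $v<vr$, $wr<w$, where the difference equals
\[(2q-1)\bigl[\mu(v)-\mu(w)\bigr]+(1-q)\bigl[\mu(v)-\mu(wr)\bigr]+(1-q)\bigl[\mu(vr)-\mu(w)\bigr]\ge 0 .\]
Since $e^{uL}=\lim_N(\delta_e+\tfrac uN L)^N$, and for large $N$ the element $\delta_e+\tfrac uN L$ is a convex combination of such lazy steps, $P_u$ lies in the cone.

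\emph{The pairing in Step 3.} The right pairing is exactly $k\leftrightarrow sk$, with nothing left to absorb. Your sum equals
\[\sum_{k<sk}\bigl(P_{t-u}(k)-P_{t-u}(sk)\bigr)\bigl(\varphi'(P_t(hsk))-\varphi'(P_t(hk))\bigr).\]
The missing combinatorial fact is that $hs>h$ and $sk>k$ force $hk<hsk$. This follows from the cocycle identity $N(xy)=N(x)\triangle xN(y)x^{-1}$ for left inversion sets $N(w)=\{t:\ell(tw)<\ell(w)\}$. By that identity, $hsh^{-1}\in N(hk)$ if and only if exactly one of $\ell(hs)<\ell(h)$ and $\ell(sk)<\ell(k)$ holds, and here neither does. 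So every term is $\le 0$.

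Note also a sign slip in your sketch: you need $hsk$ to lie \emph{above} $hk$, not below it, to get $P_t(hsk)\le P_t(hk)$.
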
 

\cref{thm:LW_Coxeter} restricts the rate collections to those supported on the set of Coxeter generators of a Coxeter group. Lyons and White were also interested in the setting where the supports of the rate collections could be arbitrary generating sets. For $1\leq p\leq \infty$, let us say the pair $(G,p)$ is \dfn{rate-monotonic} if for all symmetric generating collections $\blam,\blam'\in\mathbb R_{\geq 0}^{G}$ satisfying $\blam\leq\blam'$, we have \[d_p^{\blam}(t)\geq d_p^{\blam'}(t)\] for all real $t\geq 0$.   

\begin{theorem}[{\cite{LW}}]\label{thm:LW_2}
For every finite group $G$, the pairs $(G,2)$ and $(G,\infty)$ are rate-monotonic. 
\end{theorem}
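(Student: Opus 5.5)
We treat the two cases separately; both rest on writing the law of the walk through the heat semigroup. Let $Q_{\blam}=\sum_{s}\lambda_s(R_s-I)$ on $\mathbb R^G$, where $(R_sf)(g)=f(gs)$. Then $P_t^{\blam}=\delta_e e^{tQ_{\blam}}$, or equivalently $P^{\blam}_t=e^{tL_{\blam}}\delta_e$ with $L_{\blam}f=\sum_s\lambda_s(f*\delta_s-f)$. Symmetry of $\blam$ makes $L_{\blam}$ self-adjoint on $\ell^2(G)$. Translations commute with one another's conjugates via left/right convolution: $L_{\blam}$ acts by right convolution, so all the operators $L_{\boldsymbol\mu}$, for different symmetric $\boldsymbol\mu$, commute with left translations. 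They need not commute with each other, since $G$ is nonabelian in general. Increasing $\blam$ to $\blam'$ is a continuous path, so it suffices to show that the $t$-fixed derivative in a single direction $\lambda_s=\lambda_{s^{-1}}$ (increased together) is $\le0$.

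\textbf{Case $p=2$.} Write $f_t=P_t^{\blam}-u$ with $u$ uniform, so $f_t=e^{tL}f_0$ where $f_0=\delta_e-u$. Since $L$ is self-adjoint and negative semidefinite, $\|f_t\|_2^2=\langle f_0,e^{2tL}f_0\rangle$. The cleanest route is to express this as a return probability: $\|P_t\|_2^2=\sum_g P_t(g)^2=\sum_g P_t(g)P_t(g^{-1})=P_{2t}(e)$, using $P_t(g^{-1})=P_t(g)$, which follows from symmetry. Hence $d_2^2=P_{2t}^{\blam}(e)-1/|G|$. Now decompose via the Fourier transform: $P_{2t}(e)=\frac1{|G|}\sum_\rho d_\rho\operatorname{tr}\exp\!\big(2t\sum_s\lambda_s(\rho(s)-I)\big)$. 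Each $A_\rho(\blam)=\sum_s\lambda_s(\rho(s)-I)$ is Hermitian and negative semidefinite, since $\rho$ is unitary and $\frac12(\rho(s)+\rho(s^{-1}))-I\preceq0$. Moreover $A_\rho$ is monotone decreasing in the Loewner order as $\blam$ increases. The function $A\mapsto\operatorname{tr}e^{A}$ is monotone in the Loewner order: if $A\preceq B$ then $\lambda_i(A)\le\lambda_i(B)$ by Weyl's inequality, so $\operatorname{tr}e^A\le\operatorname{tr}e^B$. Therefore $P_{2t}(e)$ decreases in $\blam$, and so does $d_2$.

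\textbf{Case $p=\infty$.} The key claim is $\max_g|P_t(g)-1/|G||=P_t(e)-1/|G|$. To see this, write $P_t=P_{t/2}*P_{t/2}$. Then $P_t(g)=\sum_h P_{t/2}(h)P_{t/2}(h^{-1}g)=\langle P_{t/2},\,\tilde P_{t/2}(\cdot\, g)\rangle$-type inner products, and Cauchy--Schwarz with symmetry gives $P_t(g)\le\|P_{t/2}\|_2^2=P_t(e)$. For the lower deviation, we need $1/|G|-P_t(g)\le P_t(e)-1/|G|$. Write $P_t(g)-1/|G|=\langle f_{t/2},\,(\text{translate of } f_{t/2})\rangle$, where $f=P_{t/2}-u$ sums to zero; then Cauchy--Schwarz gives $|P_t(g)-1/|G||\le\|f_{t/2}\|_2^2=P_t(e)-1/|G|$. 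So $d_\infty^{\blam}(t)=P_t^{\blam}(e)-1/|G|=(d_2^{\blam}(t/2))^2$. Monotonicity then follows from the $p=2$ case applied at time $t/2$.

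The main obstacle is the $p=2$ monotonicity, specifically justifying that $\operatorname{tr}\exp$ is Loewner-monotone and that $A_\rho$ decreases in $\blam$. Both are standard. The reduction of $p=\infty$ to $p=2$ via the identity $d_\infty(t)=d_2(t/2)^2$ needs care with the translation convention: the walk multiplies on the right, so $P_t$ is a convolution power, and symmetry gives $P_s(h^{-1})=P_s(h)$.
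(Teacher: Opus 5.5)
The paper does not prove this statement; it quotes it from Lyons and White \cite{LW}, so there is no in-paper argument to compare yours with. Your proof is correct and complete.

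\textbf{Case $p=2$.} Three facts give the comparison of $\blam$ and $\blam'$ in a single step:
the identity $d_2^{\blam}(t)^2=P^{\blam}_{2t}(e)-1/|G|$;
the formula $P^{\blam}_{2t}(e)=\frac{1}{|G|}\sum_\rho d_\rho\operatorname{tr}e^{2tA_\rho(\blam)}$;
and $A_\rho(\blam')-A_\rho(\blam)=A_\rho(\blam'-\blam)\preceq 0$, because $\blam'-\blam$ is itself symmetric and nonnegative, combined with Weyl monotonicity.
So your opening reduction to one-direction derivatives is never used and can be deleted. The garbled remark about translations commuting can go too.

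You can also skip the decomposition into irreducibles. Since $|G|x_e$ is the trace of left multiplication by $x$ on $\mathbb C[G]$, you get $P^{\blam}_{2t}(e)=\frac{1}{|G|}\operatorname{Tr}e^{-2t\Delta_{\blam}}$ directly.

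\textbf{Case $p=\infty$.} The step that deserves one explicit line is $P_t-u=f_{t/2}*f_{t/2}$, which holds because $f_{t/2}$ sums to zero. In the paper's notation this is $a_t^{\blam}=(a_{t/2}^{\blam})^2$, the case $\theta=2$ of \cref{lem:heat}. Cauchy--Schwarz against a translate then gives $d_\infty^{\blam}(t)=(a^{\blam}_t)_e=d_2^{\blam}(t/2)^2$, and the $p=2$ case finishes the argument.

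\textbf{Relation to the paper.} Your argument shows why $p=2$ and $p=\infty$ hold for every finite group. Both distances reduce to the single coefficient $(a_t^{\blam})_e$, which is a normalized trace and hence a spectral, Loewner-monotone function of $\Delta_{\blam}$. For $p=2m\geq 4$, the quantity $\sum_g (a^{\blam}_{t})_g^{2m}$ is not a spectral invariant. That is why the paper needs the Duhamel derivative, the Fourier majorant \cref{lem:fourier_majorant}, and the $2\times 2$ block structure specific to inversion extensions. Specialized to $m=1$, the paper's route would recover only the $p=2$ case for the groups $G_{A,z}$, whereas yours requires no structural hypothesis on $G$.
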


\subsection{Main Results}
In this article, we focus on the case where $G$ is the dihedral group $D_n$ of order $2n$. In this setting, Lyons and White observed a very surprising phenomenon. They found various positive integers $n$ and real numbers $p$ throughout the set $[1,1.997]\cup[2.001,3.999]\cup[4.001,5.995]$ for which they could prove $(D_n,p)$ is not rate-monotonic. \cref{thm:LW_2} explains why they could not find such examples with $p$ exactly equal to $2$. However, they also could not find any such examples for $p$ exactly equal to $4$ or $6$, even though they could find examples with $p$ very close to $4$ or $6$. They left open the problem of explaining these observations. We resolve this problem. 

\begin{maintheorem}\label{thm:main_forward}
For all positive integers $m$ and $n$, the pair $(D_n,2m)$ is rate-monotonic.     
\end{maintheorem}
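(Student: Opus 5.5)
The plan is to follow the abelian argument behind \cref{thm:LW_2}: expand $\sum_g f(g)^{2m}$ in Fourier modes and show it is a sum of terms, each nonnegative and nonincreasing in $\blam$. Here $f=P_t^{\blam}-\frac1{|G|}$. Evenness of $p$ is what makes this possible: $d_{2m}^{\blam}(t)^{2m}=\sum_g f(g)^{2m}$ is a polynomial in the values of $f$, with no absolute values.

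\emph{Abelian model.} For an abelian group, write $f=\frac1{|G|}\sum_{\chi\neq 1}e^{-t\mu_\chi}\chi$, where $\mu_\chi=\sum_s\lambda_s(1-\chi(s))$. Symmetry of $\blam$ makes $\mu_\chi$ real and nonnegative. Orthogonality then gives
\[\sum_g f(g)^{2m}=|G|^{1-2m}\sum_{\substack{\chi_1\cdots\chi_{2m}=1\\ \chi_i\neq1}}\prod_i e^{-t\mu_{\chi_i}}.\]
Each summand is positive and nonincreasing in every $\lambda_s$, so we are done. This is exactly where an even integer exponent is needed.

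\emph{Dihedral case.} Write $D_n=\langle r,s\rangle$ and use the index-two abelian subgroup $C_n=\langle r\rangle$. The nontrivial irreducibles are:
\begin{itemize}
\item at most three nontrivial one-dimensional characters;
\item the two-dimensional representations $\rho_j=\mathrm{Ind}_{C_n}^{D_n}(\omega^j)$, in which $r^k\mapsto\mathrm{diag}(\omega^{jk},\omega^{-jk})$ and reflections act by antidiagonal matrices.
\end{itemize}
By Fourier inversion,
\[f(g)=\frac1{2n}\sum_{\rho\neq1}d_\rho\,\mathrm{tr}\bigl(\rho(g)^{-1}e^{-tM_\rho}\bigr),\qquad M_\rho=\sum_s\lambda_s(I-\rho(s)),\]
and $M_\rho$ is positive semidefinite.

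Raising to the power $2m$ and summing over $g$ gives, by Schur orthogonality, a contraction of $e^{-tM_{\rho_1}}\otimes\cdots\otimes e^{-tM_{\rho_{2m}}}$ against the projection onto the $D_n$-invariants of $\rho_1\otimes\cdots\otimes\rho_{2m}$. In the diagonal basis $e_{\pm}$, these invariants are explicit. They are spanned by $v_\varepsilon+v_{-\varepsilon}$, where $v_\varepsilon=e_{\varepsilon_1}\otimes\cdots\otimes e_{\varepsilon_{2m}}$ and $\sum\varepsilon_ij_i\equiv0\pmod n$; the sign adjustments needed for one-dimensional factors are handled similarly. The invariance under $\varepsilon\mapsto-\varepsilon$ comes from the reflections. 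So
\[\sum_g f(g)^{2m}=\sum_{\varepsilon,\varepsilon'}c_{\varepsilon,\varepsilon'}\prod_{i}\bigl(e^{-tM_{\rho_i}}\bigr)_{\varepsilon_i\varepsilon'_i},\]
with nonnegative combinatorial coefficients $c$.

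\emph{Main obstacle.} Unlike the abelian case, the entries of $e^{-tM_\rho}$ need not be individually monotone in $\blam$. I would first conjugate by the real orthogonal change of basis $e_\pm\mapsto(e_+\pm e_-)/\sqrt2$. In that basis $M_\rho$ is a real symmetric matrix assembled from rotation blocks $2\lambda_{r^k}(1-\cos(2\pi jk/n))I$ and reflection blocks. The rotation parts are scalar and therefore harmless: they factor out as $e^{-t\sum\lambda_{r^k}(1-\cos)}$ from every term. The reflections contribute rank-one projections $\lambda_{sr^k}(I-R_k)$.

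I would then regroup the $\varepsilon$-sum so that it becomes a sum of traces of products of the form
\[\mathrm{tr}\bigl(e^{-tM_1}B_1e^{-tM_2}B_2\cdots\bigr)\]
with $B_i\succeq0$, pairing each $\varepsilon$-pattern with its negative. The goal is to express the whole quantity as $\sum_k\|\Phi_k\|^2$, where each $\Phi_k$ is built from products of $e^{-tM}$'s acting on fixed vectors. I would then show each $\|\Phi_k\|^2$ is nonincreasing via the derivative $\frac{d}{d\lambda}e^{-tM}=-\int_0^te^{-uM}(\partial M)e^{-(t-u)M}\,du$ with $\partial M\succeq0$. This is the delicate step.

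If the sum-of-squares structure fails, the fallback is a coupling formulation. Realize $\sum_g f(g)^{2m}$, up to the uniform correction handled by inclusion--exclusion, as a return probability of $2m$ walks run on $C_n^{2m}$ with shared reflection clocks. The parity of the number of reflections is common to all copies, so such a walk reduces to an abelian walk twisted by a global $\pm1$. Then I would try to apply the abelian monotonicity argument to that extended abelian group.
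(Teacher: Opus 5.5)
You have a genuine gap: the reflection step, which is where the theorem's content lies, is stated as a goal but never argued, and the mechanism you sketch for it fails.

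Your abelian computation is correct, and your block picture does handle the rotation rates. For each tuple $(\rho_1,\dots,\rho_{2m})$, Schur orthogonality produces a multiple of $\mathrm{tr}\bigl(P_{\mathrm{inv}}\bigotimes_i e^{-tM_{\rho_i}}\bigr)$. The rotation part of each $M_{\rho_i}$ is scalar and factors out as a nonincreasing exponential. The remaining cofactor is nonnegative because it pairs a projection with a positive operator. (Nonnegativity holds only for this grouped trace, not for the individual entries $\prod_i(e^{-tM_{\rho_i}})_{\varepsilon_i\varepsilon_i'}$. Your $c_{\varepsilon,\varepsilon'}$ also acquire signs when a one-dimensional character that is $-1$ on reflections occurs.)

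That step needs no evenness: the paper handles it for every $p\geq 1$ by Jensen, since the rotation increment is central. The counterexamples of \cref{thm:main_converse} come precisely from raising the rate of a reflection. So the reflection step carries everything, and there your proposal only states a goal.

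The representation $\sum_k\|\Phi_k\|^2$ is never constructed. The mechanism you propose for termwise monotonicity also does not work. For $\partial M\succeq0$, the Duhamel derivative $-\int_0^te^{-uM}(\partial M)e^{-(t-u)M}\,\mathrm{d}u$ need not be negative semidefinite, because $x\mapsto e^{-tx}$ is not operator monotone. Already in a single $2\times 2$ block, $\|e^{-M/2}v\|^2=\langle e^{-M}v,v\rangle$ can increase. Take $M=\mathrm{diag}(0,3)$, $v=e_1+e_2$, $w=e_1-6e_2$. The derivative in the direction $ww^{\mathsf T}$ is $-\int_0^1\langle w,e^{-(1-u)M}v\rangle\langle w,e^{-uM}v\rangle\,\mathrm{d}u\approx1.01>0$.

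What is automatically monotone is a full trace $\mathrm{tr}\,\phi(M)$ with $\phi$ decreasing, which is why $p=2$ is easy. For $2m\geq4$, $P_{\mathrm{inv}}$ couples several blocks and that structure is lost.

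Your fallback does not help either. Expanding $f^{2m}$ binomially gives collision probabilities of \emph{independent} copies with alternating signs. Those copies share neither reflection clocks nor a common parity. The alternating signs would also block deducing monotonicity from monotone pieces.

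The missing ingredient is the paper's \cref{prop:reflection_inequality}. It is a global inequality for the derivative, not a termwise decomposition.

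\begin{itemize}
\item Write the centered heat element as $x=\sum_k u_kr^k+\sum_k v_kr^kb$, and write $x^\theta bx^{1-\theta}=\sum_k q_kr^k+\sum_k y_kr^kb$.
\item By Duhamel and \cref{lem:heat}, the derivative of $\sum_g x_g^{2m}$ in the rate of $b$ is $2mt\int_0^1\bigl(\sum_k u_k^{2m-1}q_k+\sum_k v_k^{2m-1}y_k-\sum_k u_k^{2m}-\sum_k v_k^{2m}\bigr)\,\mathrm{d}\theta$.
\item The $2\times2$ block computation shows $\mathrm{Re}\,Q_\chi=\mathrm{Re}\,V_\chi$. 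Hence the first sum equals $\sum_k u_k^{2m-1}v_k$ for every $\theta$.
\item The same computation shows $|Y_\chi|\leq U_\chi$. The even-exponent majorant principle (\cref{lem:fourier_majorant}) then gives $\|y\|_{2m}\leq\|u\|_{2m}$.
\item With $R=\|u\|_{2m}$ and $S=\|v\|_{2m}$, H\"older bounds the integrand by $R^{2m-1}S+S^{2m-1}R-R^{2m}-S^{2m}=-(R-S)(R^{2m-1}-S^{2m-1})\leq0$.
\end{itemize}

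This cancellation between rotation and reflection coefficients, with evenness entering through the majorant lemma, is what your plan would have to supply.
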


\cref{thm:main_forward} extends to a more general family of groups. Let $A$ be a finite abelian group, and let $z\in A$ be an involution. We define the \dfn{inversion extension}
\begin{equation}\label{eq:inv_presentation}
G_{A,z}=\langle A,b\mid b^2=z,\ bab^{-1}=a^{-1}\text{ for every $a\in A$}\rangle.
\end{equation}
Every element of $G_{A,z}$ has a unique expression of the form $a$ or $ab$, where $a\in A$. Ordinary dihedral groups, generalized dihedral groups, dicyclic groups, and generalized quaternion groups are all inversion extensions of the form in \eqref{eq:inv_presentation}. 

\begin{maintheorem}\label{thm:main_general}
Let $A$ be a finite abelian group, and let $z\in A$ be an involution. For every positive integer $m$, the pair $(G_{A,z},2m)$ is rate-monotonic.  
\end{maintheorem}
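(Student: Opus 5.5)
The plan is to prove monotonicity infinitesimally. By continuity it suffices to show that for each $s\in G=G_{A,z}$ the map $\lambda_s=\lambda_{s^{-1}}\mapsto d_{2m}^{\blam}(t)^{2m}$ is nonincreasing; here we fix the other rates and keep the support generating. Let $\mu=\sum_s\lambda_s(\delta_s-\delta_e)$, so $P_t=\exp^{*}(t\mu)$ as a convolution exponential. Set $f=P_t-u$, where $u$ is uniform, and let $\nu_s=\delta_s+\delta_{s^{-1}}-2\delta_e$. Duhamel's formula gives $\partial P_t=\int_0^t P_r*\nu_s*P_{t-r}\,dr$ for the derivative in the symmetric pair $(\lambda_s,\lambda_{s^{-1}})$. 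Hence
\[
\partial\, d_{2m}^{\blam}(t)^{2m}=2m\int_0^t\big\langle f^{2m-1},\,P_r*\nu_s*P_{t-r}\big\rangle\,dr ,
\]
where $f^{2m-1}$ is the pointwise power; this uses that $\nu_s$ kills constants. The goal is to show that each integrand is $\le 0$.

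Next I would compute everything via the Fourier transform on $G$. The irreducible representations of $G$ are one- or two-dimensional. The two-dimensional ones are $\rho_\chi$ for $\chi\in\widehat A$ with $\chi^2\neq1$, given by $\rho_\chi(a)=\mathrm{diag}(\chi(a),\chi(a)^{-1})$ and $\rho_\chi(b)=\left(\begin{smallmatrix}0&\chi(z)\\1&0\end{smallmatrix}\right)$. The key structural observation is as follows. For a symmetric measure $\mu$, the $A$-part contributes a real scalar $\alpha_\chi I$ to $\widehat\mu(\rho_\chi)$, because $\mu(a)=\mu(a^{-1})$. The $Ab$-part pairs $ab$ with $(ab)^{-1}=abz$, and since $(ab)^2=z$ it contributes an off-diagonal matrix $\left(\begin{smallmatrix}0&w\\ \bar w'&0\end{smallmatrix}\right)$ of a rigid form. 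So $\widehat\mu(\rho_\chi)$ is self-adjoint up to a fixed diagonal twist. It has eigenvalues $\alpha_\chi\pm|w_\chi|\le0$, and $\widehat{P_t}(\rho_\chi)$ and $\widehat{\nu_s}(\rho_\chi)$ lie in the same explicit family. Using this, I would write $f$ and $f^{2m-1}$ in terms of these parameters. The point is that for even exponents $2m$, the pointwise power $f^{2m-1}=f^{m}\cdot f^{m-1}$ can be expanded, by the Fourier inversion formula, as a sum of $2m-1$-fold convolution-type products over tuples of irreducibles. Because every character involved is a product of characters of $A$ together with the twisted $b$-part, the pairing becomes a sum of products of real quantities of the form $e^{t(\alpha\pm|w|)}$ times the nonpositive factors coming from $\widehat{\nu_s}$.

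The hope is then to organize $\langle f^{2m-1},P_r*\nu_s*P_{t-r}\rangle$ as $-\sum(\text{squares})$. Concretely, I would write $\sum_g f(g)^{2m}=\|f^m\|_2^2$ and use Plancherel on $f^m$. I would show that $\partial f^m$ pairs against $f^m$ through a positive semidefinite form, namely $-\widehat{\nu_s}$ conjugated by $\widehat{P_r}$, $\widehat{P_{t-r}}$, which are commuting self-adjoint-type matrices after the diagonal twist. Positivity should then follow from a Schur-product argument: pointwise products of functions whose Fourier matrices lie in this cone of ``twisted positive semidefinite'' matrices stay in the cone. This is exactly where evenness of the exponent enters, and it is consistent with the counterexamples for non-even $p$, where $|f|^{p-1}\operatorname{sgn} f$ is not a polynomial in $f$.

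The main obstacle is the noncommutativity. Symmetric elements of the group algebra of $G$ do not commute: for instance $(ab+abz)(a'b+a'bz)\neq(a'b+a'bz)(ab+abz)$ unless $(a a'^{-1})^2=e$. So $\widehat{P_r}$ and $\widehat{\nu_s}$ need not commute in the two-dimensional representations, and the Duhamel integrand cannot simply be collapsed to $t\,P_t*\nu_s$. My first attempt would be to exploit the $2\times2$ structure directly. For each $\chi$, all matrices in question have the form $\alpha I+\left(\begin{smallmatrix}0&w\\ \bar w&0\end{smallmatrix}\right)$ up to the twist, i.e.\ they lie in a real span of $I$ and two Pauli-type matrices. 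There I would verify by hand that $\operatorname{tr}\!\big(X\,e^{rM}N e^{(t-r)M}\big)\le0$ for the relevant $X$, $M$, $N$. If that fails, the fallback is to average over the automorphism $a\mapsto a^{-1}$ (conjugation by $b$) to reduce to a commutative subalgebra before applying the Schur-product positivity.
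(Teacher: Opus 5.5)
Your proposal has a genuine gap at the one step that carries the theorem: the case $s\notin A$. Your reduction is correct and is the paper's. You treat one inverse-orbit at a time and apply Duhamel. Since $\nu_s$ kills constants, the integrand at $r=\theta t$ is $\langle f^{2m-1},a^{\theta}\nu_s a^{1-\theta}\rangle$, where $a=f$ is the centered heat element viewed in $\mathbb R[G]$. So ``each integrand is $\le 0$'' is exactly \cref{prop:reflection_inequality}, plus the easy case $s\in A$. There your idea does work: $\widehat{\nu_s}(\rho)$ is a nonpositive scalar on every irreducible block, and $\widehat{f^{2m-1}}(\rho)\succeq 0$ by the Schur product theorem.

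For $s\notin A$ you only state a hope, and the mechanism you propose cannot work. Blockwise positivity of $\widehat{f^{2m-1}}$ followed by a blockwise trace inequality carries no sign information. Take $z=e$ and $\chi^2\neq 1$, so $B=\rho_\chi(b)=\left(\begin{smallmatrix}0&1\\1&0\end{smallmatrix}\right)$. Let $M=\rho_\chi(f)=\left(\begin{smallmatrix}U&i|V|\\-i|V|&U\end{smallmatrix}\right)$ with eigenvalues $0<\mu_-<\mu_+$. A direct computation gives
\[
\int_0^1 M^{\theta}(B-I)M^{1-\theta}\,\mathrm d\theta=LB-M,\qquad L=\frac{\mu_+-\mu_-}{\log\mu_+-\log\mu_-}.
\]
This Hermitian matrix has eigenvalue $\sqrt{L^2+|V|^2}-U>0$, because the logarithmic mean exceeds the geometric mean: $L^2>\mu_+\mu_-=U^2-|V|^2$. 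Its top spectral projection has the admissible form $F=\left(\begin{smallmatrix}1/2&W\\\overline{W}&1/2\end{smallmatrix}\right)\succeq 0$, and $\mathrm{tr}\bigl(F(LB-M)\bigr)>0$. So knowing only that $\widehat{f^{2m-1}}(\rho_\chi)$ lies in the positive cone gives nothing. The noncommutativity you flag is a real obstruction, and the verification ``by hand'' you plan would fail.

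Two related points also fail. First, $\partial(f^m)=mf^{m-1}\,\partial f$ is not a linear function of $f^m$, so there is no fixed positive form through which $\partial f^m$ pairs with $f^m$. Second, pointwise products on the nonabelian $G$ do not become simple convolutions over tuples of irreducibles.

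The missing idea is to argue globally, coset by coset, with Fourier analysis on the abelian group $A$. Write $x=\sum_a u_a a+\sum_a v_a ab$ and $x^\theta b x^{1-\theta}=\sum_a q_a a+\sum_a y_a ab$. Use the blocks $\rho_\chi$, for all $\chi\in\widehat A$, only to extract two facts: $\mathrm{Re}\,\widehat q(\chi)=\mathrm{Re}\,\widehat v(\chi)$ and $|\widehat y(\chi)|\le\widehat u(\chi)$.

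The first fact gives $\sum_a u_a^{2m-1}q_a=\sum_a u_a^{2m-1}v_a$, since $u_{a^{-1}}=u_a$. The second fact gives $\|y\|_{2m}\le\|u\|_{2m}$ through the even-exponent majorant principle of \cref{lem:fourier_majorant}, which is Parseval plus the triangle inequality on $m$-fold convolutions of Fourier transforms on $A$. H\"older's inequality across the two cosets then yields
\[
\sum_a u_a^{2m-1}v_a+\sum_a v_a^{2m-1}y_a\le R^{2m-1}S+S^{2m-1}R\le R^{2m}+S^{2m}=\sum_g x_g^{2m},
\]
with $R=\|u\|_{2m}$ and $S=\|v\|_{2m}$. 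Evenness enters through a norm comparison between two different sequences, $y$ and $u$, not through positivity of $f^{2m-1}$ or a sign on each Fourier mode. That is the step your proposal does not supply.
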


In the converse direction, we obtain the following theorem, which further explicates the examples discovered by Lyons and White. 

\begin{maintheorem}\label{thm:main_converse}For every real number $p>1$ that is not an even integer, there exists a positive integer $n$ such that the pair $(D_n,p)$ is not rate-monotonic.     
\end{maintheorem}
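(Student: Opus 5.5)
We will construct, for each real $p>1$ that is not an even integer, an explicit one-parameter family of rates on $D_n$ with $n$ large, and detect a sign change in the derivative of $(d_p^{\blam}(t))^p$ with respect to one rate. Write $D_n=\langle r,s\mid r^n=s^2=e,\ srs=r^{-1}\rangle$. Take $\blam$ with rate $\alpha$ on each of $r,r^{-1}$ and rate $\beta$ on a single reflection $s$, and vary $\beta$; generation holds for $\alpha,\beta>0$. Since $s$ is an involution, the walk factors: the rotation part performs a continuous-time symmetric walk on $\mathbb Z_n$, and each ring of $s$ flips the coset and reverses direction. This gives an explicit formula for $P_t^{\blam}$. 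With $N_t\sim\mathrm{Poisson}(\beta t)$ the number of $s$-rings, $P_t(r^k)$ and $P_t(r^ks)$ are mixtures, over even and odd $N_t$, of the rotation-walk kernel $q_t(k)$ on $\mathbb Z_n$, which is symmetric in $k$. Hence
\[
P_t(r^k)=\tfrac12(1+e^{-2\beta t})\,q_t(k),\qquad P_t(r^ks)=\tfrac12(1-e^{-2\beta t})\,q_t(k).
\]
Then $F(\beta):=\sum_g|P_t(g)-\tfrac1{2n}|^p$ is an explicit function of $u=e^{-2\beta t}\in(0,1)$, and rate-monotonicity would force $F$ to be nonincreasing in $\beta$, that is, nondecreasing in $u$.

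Next we take $n\to\infty$ with $\alpha t$ fixed. Then $q_t(k)$ is essentially a Bessel-type kernel, and $1/(2n)\to0$, so the subtracted uniform term becomes negligible except through the many sites where $q_t$ is tiny. Here is where the "even integer" condition enters. If the constant term were exactly negligible, $F$ would be $\tfrac{1}{2^p}\big[(1+u)^p+(1-u)^p\big]\sum_k q_t(k)^p$, which is increasing in $u$, so there is no violation. The violation must therefore come from the sites where $q_t(k)$ is comparable to $\tfrac1{2n}$, and the regime must be tuned so that those contribute. An alternative regime is more robust, and I would try it first: let $\alpha t$ be small, so that the rotation part barely moves. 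Then $q_t(0)\approx1$ while $q_t(k)$ is small for $k\ne0$. Alternatively, scale $n$ so that $\alpha t\cdot n$ is moderate, making most mass uniform-ish along the cycle, and write $q_t(k)=\tfrac1n(1+\varepsilon h(k))$ with $\sum_k h(k)=0$. The deviations are then $\tfrac1{2n}(u+\varepsilon h_\pm)$-type quantities on the two cosets, of the form $\tfrac{1}{2n}\big(\pm u+(1\pm u)\varepsilon h(k)\big)$. We expand $F$ as a sum of $|c_1+c_2h(k)|^p$ over both cosets.

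The key step is to analyze $\Phi(u)=\sum_k\big(|u+(1+u)\varepsilon h(k)|^p+|{-u}+(1-u)\varepsilon h(k)|^p\big)$ for small $u$ comparable to $\varepsilon$. Choose $u=\varepsilon v$ and factor out $\varepsilon^p$; to leading order this gives $\sum_k\big(|v+h(k)|^p+|{-v}+h(k)|^p\big)=:\Psi(v)$. For $p=2m$ this is a polynomial whose odd terms cancel and whose even terms have nonnegative coefficients, hence it is monotone in $v\ge0$, matching \cref{thm:main_forward}. For $p$ not an even integer, we use that $x\mapsto|x|^p$ is not a polynomial. The distribution of $h(k)$ as $k$ ranges over $\mathbb Z_n$, as $n\to\infty$, is (by choosing the rotation kernel, e.g.\ $\alpha t\asymp n^2$ so that $h$ is essentially a cosine) an arcsine-type law $\mu$. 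We need $\Psi'(v)<0$ for some $v>0$, equivalently that $v\mapsto\int(|h+v|^p+|h-v|^p)\,d\mu(h)$ is not nondecreasing. Taking $h=\cos\theta$, one computes the expansion in $v$. Its coefficients are, up to positive constants, $\binom{p}{2j}\int|h|^{p-2j}\,d\mu$ in the generalized sense, and these alternate in sign once $2j>p$ when $p$ is not an even integer. Near $v$ where $|v|$ crosses the support edge $1$, the non-analytic term $|v-1|^{p+1/2}$ (from the arcsine endpoint density) appears with a coefficient whose sign depends on $\sin(\pi p/2)$-type factors. Checking that this forces a decrease somewhere is the main obstacle. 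I expect to handle it by a Mellin/Fourier computation of $\int|\cos\theta-v|^p\,d\theta$ and locating a negative derivative. If the arcsine law is uncooperative, I will instead engineer $h$ two-valued, by choosing several rotation rates so that the walk on $\mathbb Z_n$ is near a two-point-mass-plus-uniform. With $h\in\{a,-b\}$ the explicit function $\Psi$ can be checked directly to fail monotonicity for non-even $p$ by comparing the asymptotics at $v\to\infty$ (governed by $\binom{p}{2j}$ signs) against convexity near $0$.

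Finally, the error terms from the expansion (higher order in $\varepsilon$ and finite-$n$ corrections) are uniformly small relative to the $\varepsilon^p$ main term, so a strict decrease of $\Psi$ on some interval transfers to $F$ for large $n$. This yields symmetric generating $\blam\le\blam'$, differing only in the rate on $s$, with $d_p^{\blam}(t)<d_p^{\blam'}(t)$.
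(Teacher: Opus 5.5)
There is a fatal gap. In the family you chose, increasing $\beta$ provably never increases $d_p$, for every $p\ge 1$, every $n$, every $\alpha t$, and any choice of symmetric rotation rates. So no tuning within this family can produce a counterexample. Your own formula already shows this. Put $u=e^{-2\beta t}$, $w_k=\tfrac12 q_t(k)-\tfrac1{2n}$ and $z_k=\tfrac12 q_t(k)$. Then
\[
F=\sum_{k}\Bigl(|w_k+uz_k|^p+|w_k-uz_k|^p\Bigr),
\]
and each summand is an even convex function of $u$. Hence $F$ is even and convex, so it is nondecreasing on $[0,1]$, i.e.\ nonincreasing in $\beta$.

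The same observation defeats your key step directly. The function $\Psi(v)=\sum_k\bigl(|h(k)+v|^p+|h(k)-v|^p\bigr)$ is even and convex in $v$, so $\Psi'(v)\ge 0$ for all $v>0$, whatever $h$ is (arcsine law or two-valued). The signs of $\binom{p}{2j}$ and the endpoint singularities cannot override convexity, and no error-term analysis can rescue this.

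The structural reason is that symmetric rates on rotations give a central Laplacian, because conjugation by $s$ swaps $r^j$ and $r^{-j}$. Then $s$ commutes with the heat element and $\frac{\mathrm d}{\mathrm d\beta}a_t=-t(e-s)a_t$, so
\[
\frac{\mathrm d}{\mathrm d\beta}\sum_g|a_g|^p=-tp\Bigl(\sum_g|a_g|^p-\sum_gJ_p(a_g)a_{sg}\Bigr)\le 0
\]
by H\"older. This is the same mechanism as the case $\mathcal O\subseteq A$ in the proof of \cref{thm:main_general}.

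To get a violation, the heat element must fail to commute with the reflection whose rate you increase. Only then does the derivative $t\int_0^1\bigl(a^\theta b a^{1-\theta}-a\bigr)\,\mathrm d\theta$ differ genuinely from $t(ba-a)$, and this requires positive rates on other reflections. The paper gets full freedom here through \cref{lem:heat_realization}. Any positive $x$ with $x\pi=0$ that is positive definite on $\ker\pi$ equals $e^{R}a_1^{\blam}$ for some symmetric generating $\blam$: take $H=-\log x$ on $\ker\pi$ and $\lambda_g=R/(2n)-h_g$.

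The paper then chooses Fourier blocks $M_{\pm1}=2P_+$ and $M_{\pm K}=2\epsilon P_{\pm i}$. This makes $M^\theta BM^{1-\theta}$ equal to $M$ at $\pm1$ and to $0$ at $\pm K$. The first-order term in $\epsilon$ of the derivative is then $-C^p\sum_jJ_p(\cos(2\pi j/n))\cos(2\pi Kj/n)$. This is positive because the Fourier coefficient $\tfrac2\pi\int_0^{\pi/2}\cos^{p-1}t\,\cos(Kt)\,\mathrm dt$ is negative for $K=2m+3$ with $2m<p<2m+2$ (\cref{lem:Fourier}). That negative Fourier coefficient of $J_p(\cos t)$ is where ``$p$ not an even integer'' actually enters, and it is missing from your plan.
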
 

\begin{remark}
Lyons and White already note that $(D_{5},1)$ is not rate-montonic \cite{LW}, so one could extend the statement of \cref{thm:main_converse} to all $p\geq 1$. However, assuming $p>1$ allows us to give a simpler proof.   
\end{remark}

We suspect that there should be a generalization of \cref{thm:main_general} to an even broader family of groups. 

\begin{conjecture*}
If a finite group $G$ has an abelian subgroup of index $2$, then $(G,2m)$ is rate-monotonic for every positive integer $m$.    
\end{conjecture*}

\subsection{Relation to Prior Work} 
The problem of comparing the rates at which two Markov chains approach a common stationary distribution has a long history. Diaconis and Saloff-Coste developed general techniques for comparing eigenvalues and convergence rates of reversible Markov chains, with particular attention to random walks on finite groups \cite{DSCgroups,DSCreversible} (see also \cite{LP} for general background on mixing times). These methods generally yield quantitative bounds on convergence rather than an exact comparison between the distributions of two walks at every fixed time. Exact comparisons at fixed times are known in several more structured settings. Peres and Winkler proved that inserting extra updates cannot delay mixing for monotone spin systems started from an extremal configuration \cite{PW}, while Fill and Kahn developed comparison inequalities for stochastically monotone Markov chains that yield comparisons at fixed times \cite{FK}. 

There are also striking examples showing that mixing can be sensitive to seemingly mild changes in a random walk. Hermon and Kozma constructed pairs of Cayley graphs whose metrics differ by a uniformly bounded distortion but whose total-variation mixing times differ by an unbounded factor; they also constructed non-transitive networks in which small increases in certain edge weights can delay mixing \cite{HK}. These results concern mixing times along sequences of graphs, whereas rate-monotonicity asks for an exact comparison at each fixed time on a single finite group.

Random walks on dihedral groups have also been studied from the perspective of asymptotic mixing times. McCollum obtained lower and upper bounds for walks generated by one rotation and one reflection, as well as for walks generated by random subsets of a dihedral group \cite{McCollumLower,McCollumUpper}. More recently, Huang and Rao proved cutoff for random walks on finite dihedral groups driven by a growing number of uniformly chosen random generators and determined the cutoff time throughout a broad range of parameters \cite{HuangRao}. These works study the amount of time required for particular families of walks to approach stationarity as the size of the group tends to infinity, which differs from our set-up. 

Fourier-analytic and representation-theoretic methods for studying random walks on finite groups are classical (see, e.g., \cite{Dia,Terras}). The distinction between even and non-even exponents in \cref{thm:main_forward,thm:main_converse} has a particularly close analogue in the Hardy--Littlewood majorant problem. Hardy and Littlewood initiated the study of whether enlarging Fourier coefficients in absolute value can decrease an $\ell^p$-norm \cite{HL}. At positive even integers, the corresponding inequality with constant $1$ follows from Parseval's identity, while for non-even exponents it fails in general \cite{Boas,GR,MS,Krenedits}. This dichotomy is closely connected to our proofs. The proof of \cref{thm:main_forward} uses a finite cyclic version of the positive majorant argument for even exponents, while the construction in the proof of \cref{thm:main_converse} exploits a negative Fourier coefficient of the type that appears in counterexamples to the Hardy--Littlewood majorant property.

\subsection{Outline}
 \cref{sec:forward} is devoted to proving \cref{thm:main_general}, which implies \cref{thm:main_forward} as a special case. \cref{sec:converse} is devoted to proving \cref{thm:main_converse}. In the appendix, we briefly discuss the formal verification of the theorems in this paper.

\section{Rate-Monotonicity at Even Integers}\label{sec:forward} 

In this section, we fix a finite abelian group $A$ and an involution $z\in A$, and we let $G=G_{A,z}$ be the inversion extension presented in \eqref{eq:inv_presentation}. Every element of $G_{A,z}$ has a unique expression of the form $a$ or $ab$, where $a\in A$. For every $a\in A$, we have
\[(ab)^2=z\quad\text{and}\quad (ab)^{-1}=zab.\]
When $A=\langle r\rangle$ is cyclic of order $n$ and $z=e$, we recover the presentation
\begin{equation}\label{eq:presentation}
D_n=\langle r,b\mid r^n=b^2=e,\ brb=r^{-1}\rangle.
\end{equation}

We begin by interpreting the random walk ${\bf X}^{\blam}$ via the group algebra of $G$. For \[x=\sum_{g\in G}x_g g\in\mathbb C[G],\] let $x^*=\sum_{g\in G}\overline{x_g}g^{-1}$. We identify $x$ with the operator on $\ell^2(G)$ given by left multiplication by $x$. We say $x$ is \dfn{positive} if this operator is positive semidefinite. If $x$ is positive and $\theta>0$, then the Spectral theorem allows us to define a fractional power $x^\theta\in\mathbb C[G]$. When $x$ has real coefficients in the standard basis of the group algebra, so does $x^\theta$.

The element 
\[\pi=\frac{1}{|G|}\sum_{g\in G}g\]
is a central idempotent, and for $x\in\mathbb C[G]$, the element $\pi x$ is the orthogonal projection of $x$ onto the space of constant functions. Given a symmetric rate collection $\blam$, define
\[\Delta_{\blam}=\sum_{s\in G}\lambda_s(e-s).\] Let us also define the \dfn{heat element} $h_t^{\blam}=e^{-t\Delta_{\blam}}$ and the \dfn{centered heat element} $a_t^{\blam}=h_t^{\blam}-\pi$ of the random walk ${\bf X}^{\blam}$ at time $t$. 

\begin{lemma}\label{lem:heat}
For every symmetric rate collection $\blam$ and every $t\geq 0$, we have
\[h_t^{\blam}=\sum_{g\in G}P_t^{\blam}(g)g.\]
Moreover, $a_t^{\blam}$ is positive, and
\[\left(a_t^{\blam}\right)^\theta=e^{-\theta t\Delta_{\blam}}(e-\pi)\]
for every $\theta>0$.
\end{lemma}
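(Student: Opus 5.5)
The plan has three parts: identify the heat element with the law of the walk via the Kolmogorov forward equation, then treat $\Delta_{\blam}$ as a self-adjoint operator and read off positivity and fractional powers from its spectral decomposition.

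\emph{Heat element equals the law.} Set $u_t=\sum_{g\in G}P_t^{\blam}(g)g\in\mathbb C[G]$. Since the walk multiplies on the right by $s$ at rate $\lambda_s$, the forward equation is
\[\frac{d}{dt}P_t^{\blam}(g)=\sum_{s\in G}\lambda_s\bigl(P_t^{\blam}(gs^{-1})-P_t^{\blam}(g)\bigr).\]
This says exactly $\frac{d}{dt}u_t=-u_t\Delta_{\blam}$, with $u_0=e$. The group algebra is not commutative, so I must be careful that the equation has right multiplication. Uniqueness of solutions to linear ODEs in the finite-dimensional space $\mathbb C[G]$ then gives $u_t=e^{-t\Delta_{\blam}}$. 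No left/right issue arises here, because $e^{-t\Delta_{\blam}}$ is a power series in the single element $\Delta_{\blam}$. If one wants to avoid the forward equation, one can instead condition on the number of rings: the total rate is $\Lambda=\sum_s\lambda_s$, and each ring is $s$ with probability $\lambda_s/\Lambda$. This gives
\[u_t=\sum_k e^{-\Lambda t}\frac{(\Lambda t)^k}{k!}\,(\mu)^k, \qquad \mu=\frac{1}{\Lambda}\sum_s\lambda_s s,\]
which equals $e^{-t(\Lambda e-\Lambda\mu)}=e^{-t\Delta_{\blam}}$.

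\emph{Spectral facts about $\Delta_{\blam}$.} Symmetry $\lambda_s=\lambda_{s^{-1}}$ together with real coefficients gives $\Delta_{\blam}^*=\Delta_{\blam}$, so left multiplication by $\Delta_{\blam}$ is self-adjoint on $\ell^2(G)$. Each operator $e-s$ satisfies $\langle (e-s)f,f\rangle=\|f\|^2-\mathrm{Re}\langle sf,f\rangle\ge 0$, because left multiplication by $s$ is unitary; hence $\Delta_{\blam}\ge0$. Next, $\pi$ commutes with every element since it is central, and $s\pi=\pi$ gives $\Delta_{\blam}\pi=0$. It follows that $h_t^{\blam}\pi=\pi$, and therefore
\[a_t^{\blam}=h_t^{\blam}-\pi=h_t^{\blam}(e-\pi).\]
Here $e-\pi$ is an orthogonal projection commuting with $h_t^{\blam}$, and $h_t^{\blam}=e^{-t\Delta_{\blam}}$ is positive definite. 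A product of commuting positive operators is positive, so $a_t^{\blam}$ is positive.

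\emph{Fractional powers.} Diagonalize $\Delta_{\blam}$ in an orthonormal eigenbasis that also diagonalizes $\pi$; this is possible because the two operators commute. On $\operatorname{im}\pi$, the operator $a_t^{\blam}$ acts as $0$. On $\ker\pi$, it acts as $e^{-t\mu_j}$ on the $\mu_j$-eigenspace of $\Delta_{\blam}$. The $\theta$-th power is therefore $0$ on $\operatorname{im}\pi$ and $e^{-\theta t\mu_j}$ on those eigenspaces, which is precisely the operator $e^{-\theta t\Delta_{\blam}}(e-\pi)$. Finally, the operator calculus must stay inside $\mathbb C[G]$. This holds because functional calculus of a left-multiplication operator is a polynomial in it, hence again left multiplication by an element of $\mathbb C[G]$, and left multiplication determines the element by acting on $e$.

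The main point needing care is the first step, matching the side of multiplication in the generator with the right-multiplication dynamics. Everything else is routine spectral theory.
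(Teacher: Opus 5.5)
Your proposal is correct and essentially follows the paper's argument. Your conditioning-on-rings alternative is exactly the paper's expansion $h_t^{\blam}=e^{-t\Lambda}\sum_{q}\frac{t^q}{q!}w^q$ with $w=\sum_s\lambda_s s$, and the forward-equation/ODE-uniqueness route is an equally valid variant. The positivity and fractional-power claims then come from the same orthogonal decomposition $\operatorname{im}\pi\oplus\ker\pi$, using $h_t^{\blam}\pi=\pi$.

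One cosmetic fix: $\langle (e-s)f,f\rangle$ need not be real, so take real parts on both sides. Alternatively, note that positivity of $\Delta_{\blam}$ is not needed at all, since self-adjointness alone already makes $e^{-t\Delta_{\blam}}$ positive definite.
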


\begin{proof}
Because $\blam$ is symmetric, we have
\[
2\Delta_{\blam}
=\sum_{s\in G}\lambda_s(e-s)^*(e-s),
\]
so $\Delta_{\blam}$ is positive and self-adjoint. Let $\Lambda=\sum_{s\in G}\lambda_s$ and $w=\sum_{s\in G}\lambda_s s$. Since $\Delta_{\blam}=\Lambda e-w$, we have
\[
h_t^{\blam}
=e^{-t\Lambda}\sum_{q\geq 0}\frac{t^q}{q!}w^q.
\]
From this, we see that the expansion of $h_t^{\blam}$ in the standard basis of $\mathbb C[G]$ agrees with the description of the random walk ${\bf X}^{\blam}$ via Poisson processes. This proves the first statement. 

Because the coefficients of $h_t^{\blam}$ sum to $1$, we have $h_t^{\blam}\pi=\pi$. Hence, $a_t^{\blam}$ vanishes on the image of $\pi$ and agrees with the positive definite operator $h_t^{\blam}$ on the kernel of $\pi$. This proves that $a_t^{\blam}$ is positive. The same orthogonal decomposition shows that $\left(a_t^{\blam}\right)^\theta$ vanishes on the image of $\pi$ and agrees with $e^{-\theta t\Delta_{\blam}}$ on the kernel of $\pi$, which yields the final statement.
\end{proof}

Let $\widehat A$ be the character group of $A$. For a sequence $f=(f_a)_{a\in A}$ of complex numbers, we consider the unnormalized Fourier transform $\widehat f$ defined by 
\[\widehat f(\chi)=\sum_{a\in A}f_a\chi(a)\]
for $\chi\in\widehat A$. 

\begin{lemma}\label{lem:fourier_majorant}
Let $f=(f_a)_{a\in A}$ and $u=(u_a)_{a\in A}$ be sequences of complex numbers. Suppose $\widehat u(\chi)$ is a nonnegative real number and $|\widehat f(\chi)|\leq\widehat u(\chi)$ for every $\chi\in\widehat A$. For every positive integer $m$, we have
\[
\sum_{a\in A}|f_a|^{2m}\leq\sum_{a\in A}|u_a|^{2m}.
\]
\end{lemma}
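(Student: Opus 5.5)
The plan is to express $\sum_a |f_a|^{2m}$ through Fourier coefficients and compare term by term. Write $|f_a|^{2m} = (f_a)^m(\overline{f_a})^m$. The $2m$-fold product on $A$ corresponds to convolution on the Fourier side, so Parseval/Fourier inversion turns $\sum_a |f_a|^{2m}$ into a sum over $2m$-tuples of characters of products of $\widehat f$ and $\overline{\widehat f}$.

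Concretely, Fourier inversion on $A$ gives $f_a=\frac{1}{|A|}\sum_{\chi}\widehat f(\chi)\overline{\chi(a)}$ and $\overline{f_a}=\frac{1}{|A|}\sum_\chi\overline{\widehat f(\chi)}\chi(a)$. Substituting and summing over $a$ with orthogonality $\sum_a \psi(a)=|A|\,[\psi=1]$, we get
\[
\sum_{a\in A}|f_a|^{2m}=\frac{1}{|A|^{2m-1}}\sum_{\substack{\chi_1\cdots\chi_m=\psi_1\cdots\psi_m}}\prod_{i=1}^m\widehat f(\chi_i)\,\overline{\widehat f(\psi_i)},
\]
where the sum ranges over $2m$-tuples $(\chi_1,\dots,\chi_m,\psi_1,\dots,\psi_m)\in\widehat A^{2m}$ satisfying the stated constraint.

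The same identity holds for $u$, and since each $\widehat u(\chi)\ge0$, every term in the $u$-expansion is a nonnegative real number. We then apply the triangle inequality to the $f$-expansion: the left side is real and nonnegative, so it is at most the sum of the absolute values of its terms. Each such absolute value is bounded by $\prod_i \widehat u(\chi_i)\widehat u(\psi_i)$, using $|\widehat f|\le \widehat u$. The resulting bound is exactly the $u$-expansion, which equals $\sum_a |u_a|^{2m}$ (this uses $|u_a|^{2m}=u_a^m\overline{u_a}^m$).

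No step is a real obstacle. The only care needed is bookkeeping: getting the normalization constants right, and checking that the character constraint from orthogonality is identical in the $f$ and $u$ expansions. It is, since the constraint depends only on the tuple and not on the coefficients.
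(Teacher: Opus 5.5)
Your argument is correct and is essentially the paper's proof. The paper writes $\sum_a|f_a|^{2m}=\frac{1}{|A|}\sum_\chi|\widehat{f^m}(\chi)|^2$, bounds $|\widehat{f^m}(\chi)|\le\widehat{u^m}(\chi)$ by the triangle inequality in the $m$-fold convolution formula, and then applies Parseval; your $2m$-fold expansion under the constraint $\chi_1\cdots\chi_m=\psi_1\cdots\psi_m$ is that same computation with the terms grouped by the common value of the product, and your normalization $|A|^{-(2m-1)}$ is right.
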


\begin{proof}
The product formula for the Fourier transform gives
\[
\widehat{f^m}(\chi)
=\frac{1}{|A|^{m-1}}
\sum_{\chi_1\cdots\chi_m=\chi}
\widehat f(\chi_1)\cdots\widehat f(\chi_m),
\]
where $f^m$ denotes the pointwise power. It follows from the hypotheses that
\[
\left|\widehat{f^m}(\chi)\right|
\leq
\frac{1}{|A|^{m-1}}
\sum_{\chi_1\cdots\chi_m=\chi}
\widehat u(\chi_1)\cdots\widehat u(\chi_m)
=\widehat{u^m}(\chi).
\]
In particular, $\widehat{u^m}(\chi)$ is nonnegative. Parseval's identity now implies that 
\[\sum_{a\in A}|f_a|^{2m}=\frac{1}{|A|}\sum_{\chi\in\widehat A}|\widehat{f^m}(\chi)|^2
\leq\frac{1}{|A|}\sum_{\chi\in\widehat A}|\widehat{u^m}(\chi)|^2
=\sum_{a\in A}|u_a|^{2m}. \qedhere \] 
\end{proof}

The following proposition is the main ingredient in the proof of \cref{thm:main_general}.

\begin{proposition}\label{prop:reflection_inequality}
Let $x=\sum_{g\in G}x_g g\in\mathbb R[G]$ be positive, let $c\in G\setminus A$, and let $0<\theta<1$. Write
\[x^\theta c x^{1-\theta}
=\sum_{g\in G}w_g g.\]
Then for every positive integer $m$, we have
\[\sum_{g\in G}x_g^{2m-1}w_g\leq\sum_{g\in G}x_g^{2m}.\]
\end{proposition}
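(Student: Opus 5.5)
The plan is to reduce the inequality to an $\ell^{2m}$-norm comparison between $w$ and $x$, and then prove that comparison coset by coset using the Fourier majorant principle of \cref{lem:fourier_majorant}.

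\textbf{Reduction by H\"older.} By H\"older's inequality with exponents $\tfrac{2m}{2m-1}$ and $2m$,
\[
\sum_{g\in G}x_g^{2m-1}w_g\leq\Big(\sum_{g\in G}|x_g|^{2m}\Big)^{\frac{2m-1}{2m}}\Big(\sum_{g\in G}|w_g|^{2m}\Big)^{\frac1{2m}} .
\]
So it suffices to show $\sum_g|w_g|^{2m}\leq\sum_g x_g^{2m}$. Write $x=\alpha+\beta b$ and $w=\gamma+\delta b$ with $\alpha,\beta,\gamma,\delta\in\mathbb R[A]$, viewed as functions on $A$. I would aim for the stronger coset-wise statement
\[
\|\gamma\|_{2m}^{2m}+\|\delta\|_{2m}^{2m}\leq\|\alpha\|_{2m}^{2m}+\|\beta\|_{2m}^{2m}.
\]

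\textbf{Fourier-side description.} For each $\chi\in\widehat A$, use the (possibly reducible) two-dimensional representation $\rho_\chi$ of $G$ induced from $\chi$. In it, $a\mapsto\operatorname{diag}(\chi(a),\overline{\chi(a)})$ and $b$ is antidiagonal, with entries $1$ and $\chi(z)=\pm1$. Then
\[
\rho_\chi(x)=M_\chi=\begin{pmatrix}\widehat\alpha(\chi)&\ast\\ \ast&\widehat\alpha(\bar\chi)\end{pmatrix},
\]
with off-diagonal entries given by $\widehat\beta(\chi)$ and $\widehat\beta(\bar\chi)$ up to the sign $\chi(z)$. Since $x$ is positive, each $M_\chi$ is a positive semidefinite $2\times 2$ matrix; in particular $\widehat\alpha(\chi)\geq0$. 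Since $\rho_\chi$ is a $*$-homomorphism, it commutes with the functional calculus, so
\[
\rho_\chi(w)=M_\chi^\theta C_\chi M_\chi^{1-\theta},
\]
where $C_\chi=\rho_\chi(c)$ is an antidiagonal unitary. The diagonal entries of this matrix are $\widehat\gamma(\chi)$ and $\widehat\gamma(\bar\chi)$, and its off-diagonal entries are $\widehat\delta(\chi)$ and $\widehat\delta(\bar\chi)$ up to unimodular factors.

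\textbf{Comparison via majorants.} For the $A$-coset, the natural majorant is $u=\alpha$, whose Fourier transform is nonnegative. To apply \cref{lem:fourier_majorant}, I need the $2\times2$ estimate
\[
\bigl|(M^\theta CM^{1-\theta})_{11}\bigr|\leq M_{11}
\]
for $M\geq0$ and $C$ antidiagonal unitary, perhaps after symmetrizing between $\chi$ and $\bar\chi$. I would attack this by diagonalizing $M$ and writing $M^\theta=\mu^\theta P+\nu^\theta Q$ with spectral projections $P,Q$. The entry is then an explicit combination of $\mu,\nu$ with weighted-mean coefficients. The bound should follow from $|M_{12}|^2\leq M_{11}M_{22}$ together with the weighted AM--GM inequality $\mu^\theta\nu^{1-\theta}\leq\theta\mu+(1-\theta)\nu$.

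The $Ab$-coset is harder, because $\widehat\beta$ need not be nonnegative. Here I would not compare $\delta$ with $\beta$ directly. Instead I would use the companion element $x'=\tilde c\,x\,\tilde c^{-1}$ for a suitable $\tilde c\in G\setminus A$, which is again positive. I would then choose the pairing so that the two cosets trade roles: the $Ab$-part of $w$ is compared with the $A$-part of a positive element whose $\ell^{2m}$-mass is accounted for by $\beta$. Failing that, I would bound the combined quantity $|\widehat\gamma|^2+|\widehat\delta|^2$ (a row norm of $\rho_\chi(w)$) by the corresponding row norm of $M_\chi$. That row-norm inequality is plausible, since conjugating by a unitary and using positivity tends to contract entries. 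I would then run the Parseval argument of \cref{lem:fourier_majorant} on the pair $(\gamma,\delta)$ jointly, treating it as a $\mathbb C^2$-valued function on $A$.

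\textbf{Main obstacle.} I expect the main obstacle to be this last step: handling the non-positive off-diagonal Fourier data of the reflection coset, so that an entrywise $2\times2$ matrix inequality is strong enough to feed the even-exponent majorant argument.
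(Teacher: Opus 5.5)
\textbf{There is a genuine gap: your very first reduction targets a false inequality.} After the global H\"older step you need $\sum_g|w_g|^{2m}\le\sum_g x_g^{2m}$. Your ``coset-wise'' version is the same inequality, not a stronger one. This inequality fails.

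Take $G=D_n$ with $n\ge 13$, $m=2$, $t_j=2\pi j/n$, and $x=\sum_j u_jr^j+\sum_j v_jr^jb$ with $u_j=2\cos t_j+\epsilon\cos 3t_j$ and $v_j=\sin t_j+\epsilon\cos(3t_j-\phi)$. The Fourier blocks at $k=\pm1$ are positive definite ($U=n$, $|V|=n/2$). Those at $k=\pm3$ have rank one ($U=|V|=n\epsilon/2$), and the rest vanish, so $x$ is positive. As $\theta\to0^+$, $x^\theta$ tends to the support projection of $x$. Hence $w$ tends to the element with $A$-part $-\sin t_j+\epsilon\cos\phi\cos3t_j$ and $Ab$-part $2\cos t_j+\epsilon\cos\phi\cos(3t_j-\phi)$. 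Every fourth power involved is a trigonometric polynomial of degree at most $12<n$, so averages over $j$ equal integrals. For this limit element one gets
\[\frac1n\Big(\sum_g|w_g|^4-\sum_g x_g^4\Big)=\epsilon\Big(\tfrac12\sin\phi-4\sin^2\phi\Big)+O(\epsilon^2),\]
which is positive for $\sin\phi=1/16$ and small $\epsilon$. Fixing such $\epsilon$, you get $\|w\|_4>\|x\|_4$ for all sufficiently small $\theta$.

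The mechanism is exactly the one you worried about. The antisymmetric part of $\gamma$ is obtained from that of $\beta$ by a real Fourier multiplier with values in $[-1,1]$ (here $-1$ at $k=\pm1$ and $0$ at $k=\pm3$). Such multipliers are not $\ell^{2m}$-contractive; this is the Hardy--Littlewood phenomenon. Neither fallback can repair this:
\begin{itemize}
\item Conjugating by $\tilde c\in G\setminus A$ does not move $\beta$ into the $A$-coset: the $A$-part of $\tilde c x\tilde c^{-1}$ is still $\alpha$, because $\alpha$ is symmetric.
\item A $\mathbb C^2$-valued Parseval argument would still need a majorant with nonnegative Fourier transform. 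It would also control $\sum_a(|\gamma_a|^2+|\delta_a|^2)^m$ rather than the quantity you need.
\end{itemize}

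\textbf{The missing idea is to keep the weight $x^{2m-1}$ and pair across cosets}, since multiplication by $c$ swaps them. After taking $c=b$, write $\sum_g x_g^{2m-1}w_g=\sum_a\alpha_a^{2m-1}\gamma_a+\sum_a\beta_a^{2m-1}\delta_a$. The $2\times2$ block computation you set up yields two facts.
\begin{itemize}
\item An exact identity, $\mathrm{Re}\,\widehat\gamma(\chi)=\mathrm{Re}\,\widehat\beta(\chi)$, so $\gamma$ and $\beta$ have the same symmetrization. Since $\alpha$ is symmetric, this gives $\sum_a\alpha_a^{2m-1}\gamma_a=\sum_a\alpha_a^{2m-1}\beta_a\le R^{2m-1}S$, where $R=\|\alpha\|_{2m}$ and $S=\|\beta\|_{2m}$. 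The non-contractive antisymmetric part of $\gamma$ thus never enters.
\item A majorization, $|\widehat\delta(\chi)|\le\widehat\alpha(\chi)$. Via \cref{lem:fourier_majorant} this gives $\|\delta\|_{2m}\le R$, hence $\sum_a\beta_a^{2m-1}\delta_a\le S^{2m-1}R$.
\end{itemize}
The elementary bound $R^{2m-1}S+S^{2m-1}R\le R^{2m}+S^{2m}$ then finishes the proof.

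Your estimate $|\widehat\gamma(\chi)|\le\widehat\alpha(\chi)$ is true but pairs the wrong cosets. Both parts of $w$ are then controlled only by $\alpha$, so your two coset bounds can give at best $2\|\alpha\|_{2m}^{2m}$ on the right-hand side, never $\|\alpha\|_{2m}^{2m}+\|\beta\|_{2m}^{2m}$.
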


\begin{proof}
Every element $c\in G\setminus A$ satisfies $cac^{-1}=a^{-1}$ for every $a\in A$ and $c^2=z$, so we may replace $b$ with $c$ in the presentation of $G$. Thus, we can assume $c=b$. Write
\[x=\sum_{a\in A} u_a a+\sum_{a\in A}v_a ab
\quad\text{and}\quad
x^\theta b x^{1-\theta}
=\sum_{a\in A}q_a a+\sum_{a\in A}y_a ab;\]
all of these coefficients are real. Since $x$ is positive, it is self-adjoint. Using $(ab)^{-1}=zab$, we obtain
\begin{equation}\label{eq:self_adjoint_coefficients}
u_{a^{-1}}=u_a\quad\text{and}\quad v_{za}=v_a
\end{equation}
for every $a\in A$.

For each $\chi\in\widehat A$, note that $\chi(z)\in\{1,-1\}$, and let
\[
U_\chi=\sum_{a\in A} u_a\chi(a),\qquad
V_\chi=\sum_{a\in A}v_a\chi(a),\qquad
Q_\chi=\sum_{a\in A}q_a\chi(a),\qquad
Y_\chi=\sum_{a\in A}y_a\chi(a).
\]
The first identity in \eqref{eq:self_adjoint_coefficients} shows that $U_\chi$ is real, while the second implies that
 $V_\chi=\chi(z) V_\chi$.
In particular, $V_\chi=0$ whenever $\chi(z)=-1$.

Consider the unitary representation $\rho_\chi$ of $G$ defined by 
\[\rho_\chi(a)=\begin{pmatrix}\chi(a) &0 \\
0 &\overline{\chi(a)}
\end{pmatrix}
\quad\text{and}\quad
\rho_\chi(b)=B_\chi:=
\begin{pmatrix}
0 &\chi(z) \\
1 &0
\end{pmatrix}.\]
This is indeed a representation since $B_\chi^2=\chi(z) I=\rho_\chi(z)$ and $B_\chi\rho_\chi(a)B_\chi^{-1}=\rho_\chi(a^{-1})$. We have
\[\rho_\chi(x)=M_\chi:=\begin{pmatrix}
U_\chi &V_\chi \\
\overline{V_\chi} &U_\chi \end{pmatrix},\]
where we used the fact that $V_\chi=\chi(z) V_\chi$. The representation $\rho_\chi$ is a direct sum of irreducible representations, each of which occurs in the regular representation. Hence, the positivity of $x$ implies that $M_\chi$ is positive semidefinite. Consequently, $U_\chi\geq |V_\chi|\geq 0$. 

We wish to show that 
\begin{equation}\label{eq:f}
\mathrm{Re}(Q_\chi)=\mathrm{Re}(V_\chi)\quad\text{and}\quad |Y_\chi|\leq U_\chi
\end{equation}
for every $\chi\in\widehat A$. 

First, suppose $\chi(z)=1$. Write $V_\chi=\zeta |V_\chi|$, where $|\zeta|=1$; if $V_\chi=0$, choose $\zeta$ arbitrarily. The eigenvalues of $M_\chi$ are $\mu_{+}=U_\chi+|V_\chi|$ and $\mu_{-}=U_\chi-|V_\chi|$. For $\eta\in\{\theta,1-\theta\}$, let
\[c_\eta=\frac{\mu_{+}^\eta+\mu_{-}^\eta}{2}
\quad\text{and}\quad
d_\eta=\frac{\mu_{+}^\eta-\mu_{-}^\eta}{2}.\]
These numbers are nonnegative, and
\[M_\chi^\eta=
\begin{pmatrix}
c_\eta&\zeta d_\eta\\
\overline\zeta d_\eta&c_\eta
\end{pmatrix}.\]
Since $M_\chi^\theta M_\chi^{1-\theta}=M_\chi$, we have
\[c_\theta c_{1-\theta}+d_\theta d_{1-\theta}=U_\chi
\quad\text{and}\quad c_\theta d_{1-\theta}+d_\theta c_{1-\theta}=|V_\chi|. \]
Because $\rho_\chi$ is a unitary representation, we have
\[\rho_\chi\left(x^\theta b x^{1-\theta}\right)
=M_\chi^\theta
\begin{pmatrix}
0&1\\
1&0
\end{pmatrix}
M_\chi^{1-\theta}.\]
On the other hand,
\[\rho_\chi\left(x^\theta b x^{1-\theta}\right)=
\begin{pmatrix}
Q_\chi&Y_\chi\\
\overline Y_\chi&\overline Q_\chi
\end{pmatrix}.\]
It follows that
\[Q_\chi=\zeta d_\theta c_{1-\theta}
+\overline\zeta c_\theta d_{1-\theta}
\quad\text{and}\quad
Y_\chi=c_\theta c_{1-\theta}+\zeta^2d_\theta d_{1-\theta}.\]
This proves \eqref{eq:f} in this case. 

Now suppose $\chi(z)=-1$. Then $V_\chi=0$, so $M_\chi=U_\chi I$. We have
\[\rho_\chi\left(x^\theta b x^{1-\theta}\right)
=U_\chi
\begin{pmatrix}
0&-1\\
1&0
\end{pmatrix},\]
so
\[\rho_\chi\left(x^\theta b x^{1-\theta}\right)=
\begin{pmatrix}
Q_\chi&-Y_\chi\\
\overline Y_\chi&\overline Q_\chi
\end{pmatrix}.\]
This implies that $Q_\chi=0$ and $Y_\chi=U_\chi$, which yields \eqref{eq:f} in this case as well. 

If $f=(f_a)_{a\in A}$ is a sequence of real numbers, then the Fourier transform of the sequence ${((f_a+f_{a^{-1}})/2)_{a\in A}}$ is $(\mathrm{Re}(\widehat f(\chi)))_{\chi\in\widehat A}$. Fourier inversion and the first identity in \eqref{eq:f} yield that 
\[\frac{q_a+q_{a^{-1}}}{2}=\frac{v_a+v_{a^{-1}}}{2}\]
for every $a\in A$. Since $u_{a^{-1}}=u_a$, it follows that
\[\sum_{a\in A}u_a^{2m-1}q_a=\sum_{a\in A}u_a^{2m-1}v_a.\]
Moreover, \cref{lem:fourier_majorant} applies to the sequences $y$ and $u$ because $|Y_\chi|\leq U_\chi$ and $U_\chi\geq 0$. Hence, 
\[\left(\sum_{a\in A}|y_a|^{2m}\right)^{1/(2m)}\leq\left(\sum_{a\in A}|u_a|^{2m}\right)^{1/(2m)}.\]

Let $R=(\sum_{a\in A}|u_a|^{2m})^{1/(2m)}$ and $S=(\sum_{a\in A}|v_a|^{2m})^{1/(2m)}$. H\"older's inequality implies that 
\begin{align*}
\sum_{g\in G}x_g^{2m-1}w_g
&=\sum_{a\in A} u_a^{2m-1}q_a+\sum_{a\in A}v_a^{2m-1}y_a \\
&=\sum_{a\in A} u_a^{2m-1}v_a+\sum_{a\in A}v_a^{2m-1}y_a \\
&\leq R^{2m-1}S+S^{2m-1}R \\
&\leq R^{2m}+S^{2m}
=\sum_{g\in G}x_g^{2m}.
\end{align*}
Indeed, the penultimate inequality follows from the fact that $(R-S)(R^{2m-1}-S^{2m-1})\geq 0$. This proves the proposition.
\end{proof}

We can now prove \cref{thm:main_general}. We will use the finite-dimensional Duhamel identity
\[\frac{\mathrm d}{\mathrm d\alpha}e^{-t(C+\alpha D)}=-\int_0^te^{-\tau(C+\alpha D)}De^{-(t-\tau)(C+\alpha D)}\,\mathrm d\tau.\]
This follows by differentiating $e^{-\tau(C+(\alpha+\epsilon)D)}e^{-(t-\tau)(C+\alpha D)}$ with respect to $\tau$, integrating from $0$ to $t$, dividing by $\epsilon$, and letting $\epsilon\to 0$.

\begin{proof}[Proof of \cref{thm:main_general}]
Fix a positive integer $m$, and let $\blam\leq\blam'$ be symmetric generating rate collections on $G$. The rate at the identity has no effect on the random walk. The claim is immediate when $t=0$, so fix $t>0$. The remaining elements of $G$ split into inverse-orbits, so it suffices to prove that the quantity $(d_{2m}^{\blam}(t))^{2m}$ cannot increase when the common rate associated to one inverse-orbit $\mathcal O$ is increased. 

First, suppose $\mathcal O\subseteq A$. Fix $c\geq 0$, and let $\widetilde{\blam}$ be obtained from $\blam$ by increasing the rate assigned to every element of $\mathcal O$ by $c$. The element
\[\xi_{\mathcal O}=\sum_{s\in\mathcal O}(e-s)\]
is central in $\mathbb R[G]$. Indeed, it commutes with every element of $A$, while conjugation by an element of $G\setminus A$ sends every element of $A$ to its inverse and, as a consequence, permutes the elements of $\mathcal O$. Hence,
\[h_t^{\widetilde{\blam}}=e^{-tc\xi_{\mathcal O}}h_t^{\blam}.\]
Let $\kappa=e^{-tc\xi_{\mathcal O}}$. This is the heat element for a random walk driven by a rate collection supported on $\mathcal O$, so its coefficients $(\kappa_x)_{x\in G}$ are nonnegative and sum to $1$. Since $\kappa\pi=\pi$, we have $a_t^{\widetilde{\blam}}=\kappa a_t^{\blam}$. Writing $a_t^{\blam}=\sum_{g\in G} a_g g$ and $a_t^{\widetilde{\blam}}=\sum_{g\in G} \widetilde a_g g$, we can use Jensen's inequality to find that
\[\sum_{g\in G}\left|\widetilde a_g\right|^{2m}=\sum_{g\in G}\left|\sum_x\kappa_xa_{x^{-1}g}\right|^{2m}\leq\sum_{g\in G}\sum_x\kappa_x|a_{x^{-1}g}|^{2m}=\sum_{g\in G}|a_g|^{2m}.\] 
This proves the theorem in this case. 

We now prove the theorem in the case where $\mathcal O\subseteq G\setminus A$. Choose $c\in\mathcal O$. Then $\mathcal O=\{c,c^{-1}\}$, with repeated elements removed. Hold all rates assigned to elements of $G\setminus\mathcal O$ fixed, and let $\alpha$ denote the common rate assigned to the elements of $\mathcal O$. We think of $\alpha$ as a variable that we will increase; our goal is to prove that the centered $\ell^{2m}$-norm at a fixed time $t>0$ does not increase as we increase $\alpha$. Write
\[\Delta_\alpha=\Delta_0+\alpha\sum_{s\in\mathcal O}(e-s),\qquad h_\alpha=e^{-t\Delta_\alpha},\qquad a_\alpha=h_\alpha-\pi=h_\alpha(e-\pi). \]
Duhamel's identity tells us that 
\[ \frac{\mathrm d}{\mathrm d\alpha}h_\alpha = -\int_0^t e^{-\tau\Delta_\alpha}\left(\sum_{s\in\mathcal O}(e-s)\right)e^{-(t-\tau)\Delta_\alpha}\,\mathrm d\tau.\]
Multiplying on the right by $e-\pi$ yields the identity
\[\frac{\mathrm d}{\mathrm d\alpha}a_\alpha=-|\mathcal O|ta_\alpha+\sum_{s\in\mathcal O}\int_0^te^{-\tau\Delta_\alpha}se^{-(t-\tau)\Delta_\alpha}(e-\pi)\,\mathrm d\tau.\]
Since $s\pi=\pi$ and $\pi$ is central, each integrand in the second term will not change if we insert $e-\pi$ immediately before $s$. Therefore, invoking \cref{lem:heat} and using the substitution $\theta=\tau/t$, we find that
\[\frac{\mathrm d}{\mathrm d\alpha}a_\alpha=t\int_0^1\left(\sum_{s\in\mathcal O}a_\alpha^\theta s a_\alpha^{1-\theta}-|\mathcal O|a_\alpha\right)
\,\mathrm d\theta.\]
The values at $\theta=0$ and $\theta=1$ do not affect the integral.

Write $a_\alpha=\sum_{g\in G} a_{\alpha,g}g$, and let $\Phi(\alpha)=\sum_{g\in G} a_{\alpha,g}^{2m}$. Differentiating and using \cref{prop:reflection_inequality} for each $s\in\mathcal O$ shows that 
\[
\Phi'(\alpha) = 2mt\int_0^1
\left(\sum_{s\in\mathcal O}\sum_{g\in G}a_{\alpha,g}^{2m-1}\left(a_\alpha^\theta s a_\alpha^{1-\theta}\right)_g-|\mathcal O|\sum_{g\in G} a_{\alpha,g}^{2m}
\right)
\,\mathrm d\theta \leq 0.\]
Hence, increasing the common rate associated to an inverse-orbit in $G\setminus A$ cannot increase the centered $\ell^{2m}$-norm.

We can pass from $\blam$ to $\blam'$ by increasing the rates on one inverse-orbit at a time. Applying the preceding two arguments at each step shows that 
\[\sum_{g\in G}
\left|P_t^{\blam'}(g)-\frac{1}{|G|}\right|^{2m}\leq\sum_{g\in G}\left|P_t^{\blam}(g)-\frac{1}{|G|}\right|^{2m}.\]
Taking $(2m)$-th roots yields the desired inequality $d_{2m}^{\blam'}(t)\leq d_{2m}^{\blam}(t)$.   
\end{proof}

\begin{proof}[Proof of \cref{thm:main_forward}]
Take $A=\langle r\rangle\cong \mathbb Z/n\mathbb Z$ and $z=e$ in \cref{thm:main_general}. Then $G_{A,z}=D_n$, so \cref{thm:main_forward} follows. 
\end{proof}

\section{Lack of Rate-Monotonicity Away from Even Integers}\label{sec:converse} 

In this section, we fix a real number $p\geq 1$ that is not an even integer. For $y\in\mathbb R$, let
\[
J_p(y)=\begin{cases}
|y|^{p-2}y & \text{if $y\neq 0$}; \\
0 & \text{if $y=0$}.
\end{cases}
\]
Thus, $J_1(y)$ is the sign of $y$ when $y\neq 0$. 

\begin{lemma}\label{lem:Fourier}
There exist odd positive integers $K$ and $n$ satisfying $n>2K$ such that
\[\sum_{j=0}^{n-1}J_p\left(\cos\left(\frac{2\pi j}{n}\right)\right)
\cos\left(\frac{2\pi Kj}{n}\right)<0.\]
\end{lemma}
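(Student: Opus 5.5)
The plan is to view the sum as a Riemann sum and reduce the claim to the sign of a single Fourier coefficient of the function $f(x)=J_p(\cos x)$. For fixed $K$, the function $f(x)\cos(Kx)$ is bounded on $[0,2\pi]$. It is continuous when $p>1$, and when $p=1$ it has only the two jump discontinuities at $x=\pm\pi/2$. Hence it is Riemann integrable, and
\[
\frac1n\sum_{j=0}^{n-1}J_p\left(\cos\left(\frac{2\pi j}{n}\right)\right)\cos\left(\frac{2\pi Kj}{n}\right)\longrightarrow \frac1{2\pi}\int_0^{2\pi}J_p(\cos x)\cos(Kx)\,\mathrm dx
\]
as $n\to\infty$. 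It therefore suffices to find one odd $K$ for which this integral is strictly negative. We then take $n$ odd and large enough that $n>2K$ and the Riemann sum is also negative.

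To compute the integral, use the symmetries $x\mapsto -x$ and $x\mapsto \pi-x$. For odd $K$, the second map changes the sign of both $J_p(\cos x)$ and $\cos(Kx)$. The integral thus equals $4\int_0^{\pi/2}\cos^{q}x\cos(Kx)\,\mathrm dx$ with $q=p-1\ge 0$. I would then invoke the classical formula
\[
\int_0^{\pi/2}\cos^{q}x\cos(Kx)\,\mathrm dx=\frac{\pi\,\Gamma(q+1)}{2^{q+1}\,\Gamma\left(\frac{q+K}{2}+1\right)\Gamma\left(\frac{q-K}{2}+1\right)},
\]
valid for $q>-1$. Alternatively, it can be derived from the Beta integral via $\cos(Kx)=\mathrm{Re}\,e^{iKx}$ and a contour or binomial argument. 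Every factor is positive except $1/\Gamma\left(\frac{q-K}{2}+1\right)$, so the sign of the coefficient equals the sign of $1/\Gamma(s_K)$ with $s_K=\frac{p+1-K}{2}$.

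Because $p$ is not an even integer, $p+1$ is not an odd integer. So $s_K$ is never a nonpositive integer for odd $K$, and $1/\Gamma(s_K)\neq 0$. As $K$ runs over consecutive odd integers, $s_K$ decreases by exactly $1$ each time. Once $s_K<0$, the values $s_K$ land in consecutive intervals $(-k-1,-k)$, on which $\Gamma$ has sign $(-1)^{k+1}$. Hence the signs alternate, and some odd $K$ (one of the first two with $s_K<0$) gives a strictly negative coefficient.

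The main obstacle is the careful justification of the closed-form integral and of the sign bookkeeping for $\Gamma$ on the negative axis, including the degenerate case $p=1$, where $J_1$ is the sign function. The Riemann-sum step is routine once integrability is noted, though one should make sure the limiting constant is nonzero so that negativity transfers to finite $n$.
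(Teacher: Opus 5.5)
Your proposal is correct. Its skeleton matches the paper's: Riemann sums along odd $n$, and the symmetries $x\mapsto -x$, $x\mapsto \pi-x$ reducing the integral to $4\int_0^{\pi/2}\cos^{p-1}x\cos(Kx)\,\mathrm dx$. The two arguments differ only in how they get the sign of $I_K(p)=\int_0^{\pi/2}\cos^{p-1}t\cos(Kt)\,\mathrm dt$.

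\begin{itemize}
\item You quote the classical closed form $I_K(p)=\pi\Gamma(p)/\bigl(2^{p}\Gamma(\frac{p+1+K}{2})\Gamma(\frac{p+1-K}{2})\bigr)$ and read off the sign of $\Gamma$ on the negative axis.
\item The paper proves from scratch the two-step recurrence $I_{L+2}(p)=\frac{p-L-1}{p+L+1}I_L(p)$, using integration by parts and a product-to-sum identity. This gives $I_{2m+3}(p)=I_1(p)\prod_{\ell=1}^{m+1}\frac{p-2\ell}{p+2\ell}<0$ when $2m<p<2m+2$.
\end{itemize}

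These are the same computation in two forms: the recurrence is exactly the ratio of consecutive values of your formula, by $\Gamma(s+1)=s\Gamma(s)$.

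The paper's route is self-contained and elementary. It needs no special functions and no sign bookkeeping for $\Gamma$ on the negative reals, and it works verbatim for $p=1$. Yours gives every coefficient in closed form at once. However, it rests on a cited identity whose proof is heavier than the recurrence, which is the ``main obstacle'' you flag yourself.

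Your hedge ``one of the first two'' can be sharpened. Since $s_1=p/2>0$ and no $s_K$ is a nonpositive integer, the first odd $K$ with $s_K<0$ satisfies $s_K=s_{K-2}-1>-1$. So $s_K\in(-1,0)$, where $\Gamma<0$, and this first $K$ already works. It is $K=2m+3$, exactly the paper's choice.
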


\begin{proof}
Choose a nonnegative integer $m$ such that $2m<p<2m+2$, and let $K=2m+3$. Define
\[\gamma_K(p)=\frac{1}{2\pi}\int_0^{2\pi}J_p(\cos t)\cos(Kt)\,\mathrm dt. \]
Because $K$ is odd, we have $\gamma_K(p)=\frac{2}{\pi}I_K(p)$, where
\[I_K(p)=\int_0^{\pi/2}\cos^{p-1}(t)\cos(Kt)\,\mathrm dt. \]
For every odd positive integer $L$, we have
\begin{equation}\label{eq:I_recurrence}
I_{L+2}(p)=\frac{p-L-1}{p+L+1}I_L(p).
\end{equation}
Indeed, we can combine the identity
\[\cos((L+2)t)-\cos(Lt)=-2\sin((L+1)t)\sin t\]
together with integration by parts to obtain the identity
\[I_{L+2}(p)-I_L(p)
=-\frac{2(L+1)}{p}\int_0^{\pi/2}\cos^p(t)\cos((L+1)t)\,\mathrm dt.\]
On the other hand, the product-to-sum formula implies that 
\[2\int_0^{\pi/2}\cos^p(t)\cos((L+1)t)\,\mathrm dt
=I_{L+2}(p)+I_L(p). \]
Combining the preceding two identities yields \eqref{eq:I_recurrence}. Since \[I_1(p)=\int_0^{\pi/2}\cos^p(t)\,\mathrm dt>0,\]
we find that \[I_{2m+3}(p)=I_1(p)\prod_{\ell=1}^{m+1}\frac{p-2\ell}{p+2\ell}<0. \]
Indeed, the factors indexed by $1\leq\ell\leq m$ are positive, while $p-2(m+1)<0$. Hence, $\gamma_K(p)<0$. 

As $n$ tends to infinity through odd positive integers, the normalized Riemann sums
\[\frac{1}{n}\sum_{j=0}^{n-1}
J_p\left(\cos\left(\frac{2\pi j}{n}\right)\right)
\cos\left(\frac{2\pi Kj}{n}\right)\]
converge to $\gamma_K(p)$. This remains true when $p=1$ because the integrand is Riemann integrable. Therefore, we can choose an odd integer $n>2K$ for which the sum in the statement of the lemma is negative. 
\end{proof}

\begin{lemma}\label{lem:heat_realization}
Let $n$ be a positive integer, and let $x\in\mathbb R[D_n]$ be positive. Suppose $x\pi=0$ and $x$ is positive definite on the kernel of $\pi$. Then there exist a symmetric generating rate collection $\blam\in\mathbb R_{\geq 0}^{D_n}$ and a real number $R>0$ such that \[a_1^{\blam}=e^{-R}x. \]
\end{lemma}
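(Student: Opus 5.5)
The plan is to take a logarithm of $x$ on the kernel of $\pi$ and to read off the rates from the resulting Laplacian. All functional calculus is done for left-multiplication operators on $\ell^2(D_n)$, exactly as in the discussion before \cref{lem:heat}.

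\emph{Step 1: the logarithm.} Since $x$ is positive, it is self-adjoint. Since $x\pi=0$ and $\pi$ is a central idempotent, $x$ commutes with $\pi$, so it preserves both the image and the kernel of $\pi$, and it is positive definite on $\ker\pi$. Define $\ell\in\mathbb C[D_n]$ by spectral calculus to be the element acting as $-\log x$ on $\ker\pi$ and as $0$ on the image of $\pi$. I would justify that $\ell$ lies in the group algebra and has real coefficients in two steps. First, $\ell$ is a real polynomial in $x$ and $\pi$: use Lagrange interpolation on the finitely many eigenvalues, with $\pi$ separating the zero eigenspace coming from the image of $\pi$. Second, $x$ and $\pi$ have real coefficients. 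Because $\ell$ is self-adjoint, $\ell^*=\ell$, and with real coefficients this gives $\ell_s=\ell_{s^{-1}}$ for all $s$.

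\emph{Step 2: the Laplacian and the rates.} For $R>0$, set
\[\Delta:=R(e-\pi)+\ell=\sum_{s\in D_n}\delta_s s,\]
where the $\delta_s$ are real and satisfy $\delta_s=\delta_{s^{-1}}$. Since both $R(e-\pi)$ and $\ell$ kill the image of $\pi$, we have $\Delta\pi=0$. Comparing coefficients of $\pi$ in $\Delta\pi=(\sum_s\delta_s)\pi$ gives $\sum_s\delta_s=0$. Now set $\lambda_s:=-\delta_s$ for $s\neq e$ and $\lambda_e:=0$. Then
\[\sum_{s\ne e}\lambda_s(e-s)=\Bigl(-\sum_{s\neq e}\delta_s\Bigr)e+\sum_{s\neq e}\delta_s s=\delta_e e+\sum_{s\ne e}\delta_s s=\Delta,\]
so $\Delta=\Delta_{\blam}$. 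For $s\neq e$, the coefficient of $R(e-\pi)$ at $s$ is $-R/(2n)$, so $\lambda_s=R/(2n)-\ell_s$. Choosing $R>2n\max_s|\ell_s|$ makes every $\lambda_s$ with $s\neq e$ strictly positive. Then $\blam$ is symmetric, and its support is all of $D_n\setminus\{e\}$, which certainly generates $D_n$.

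\emph{Step 3: identifying the heat element.} By \cref{lem:heat} with $t=1$ and $\theta=1$, we have $a_1^{\blam}=e^{-\Delta}(e-\pi)$. On the image of $\pi$, this element is $0$, which agrees with $e^{-R}x$ because $x\pi=0$. On $\ker\pi$, the operator $\Delta$ acts as $R-\log x$. These two pieces commute, so $e^{-\Delta}$ acts there as $e^{-R}\,e^{\log x}=e^{-R}x$. Hence $a_1^{\blam}=e^{-R}x$, as required.

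The only genuinely delicate point is Step 1: making sure the spectrally defined logarithm is an honest element of $\mathbb R[D_n]$, with real and inverse-symmetric coefficients, rather than merely an operator. The polynomial-in-$(x,\pi)$ argument, together with $\ell^*=\ell$, handles this. The rest is coefficient bookkeeping plus the choice of $R$ large.
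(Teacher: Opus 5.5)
Your proposal is correct and follows the paper's proof essentially step for step. You take the logarithm of $x$ on $\ker\pi$, set $\Delta=\ell+R(e-\pi)$, read off $\lambda_s=R/(2n)-\ell_s$, and take $R$ large. Your Lagrange-interpolation argument that $\ell$ is a genuine element of $\mathbb R[D_n]$ with $\ell_s=\ell_{s^{-1}}$ just makes explicit a point the paper leaves implicit.
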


\begin{proof}
Define a self-adjoint element $H\in\mathbb R[D_n]$ by letting $H$ vanish on the image of $\pi$ and letting $H$ agree with $-\log x$ on the kernel of $\pi$. Then $e^{-H}=\pi+x$. Write $H=\sum_{g\in D_n}h_g g$. Since $H$ is self-adjoint, we have $h_{g^{-1}}=h_g$ for every $g\in D_n$. Moreover, $H\pi=0$, so $\sum_{g\in D_n}h_g=0$. Choose $R>0$ large enough so that \[\lambda_g=\frac{R}{2n}-h_g>0\] for every $g\in D_n\setminus\{e\}$, and let $\lambda_e=0$. Then $\blam=(\lambda_g)_{g\in D_n}$ is symmetric and generating, and
\[\Delta_{\blam} = \sum_{g\in D_n\setminus\{e\}}\lambda_g(e-g)=H+R(e-\pi).\] The element $\Delta_{\blam}$ vanishes on the image of $\pi$ and agrees with $H+R$ on the kernel of $\pi$. Therefore,
\[
h_1^{\blam}=e^{-\Delta_{\blam}}=\pi+e^{-R}x.
\]
This proves the desired identity.
\end{proof}

We are now in a position to prove \cref{thm:main_converse}.

\begin{proof}[Proof of \cref{thm:main_converse}]
Suppose the element \[x=\sum_{g\in D_n}x_g g\in\mathbb R[D_n]\] is positive and satisfies $x\pi=0$. Define
\[\mathcal E_p(x;b)=\int_0^1 \sum_{g\in D_n} J_p(x_g)\left(\left(x^\theta b x^{1-\theta}\right)_g-x_g\right)\mathrm d\theta.\]
The values of the integrand at $\theta=0$ and $\theta=1$ will not matter. Suppose $x$ is the centered heat element at time $1$ for a walk on $D_n$. For $\alpha\geq 0$, let $x_\alpha$ be the centered heat element obtained by increasing the rate of $b$ by $\alpha$ while holding all other rates fixed. The calculation via the Duhamel identity in the proof of \cref{thm:main_general} yields the identity
\[\left.\frac{\mathrm d}{\mathrm d\alpha}\right|_{\alpha=0}x_\alpha=\int_0^1\left(x^\theta b x^{1-\theta}-x\right)\mathrm d\theta.\]
Consequently, since $p>1$, we have
\begin{equation}\label{eq:infinitesimal_converse}
\left.\frac{\mathrm d}{\mathrm d\alpha}\right|_{\alpha=0}\sum_{g\in D_n}|x_{\alpha,g}|^p 
=p\,\mathcal E_p(x;b).
\end{equation}
Thus, it suffices to construct a centered heat element $x$ such that $\mathcal E_p(x;b)>0$.

Let $K$ and $n$ be as in \cref{lem:Fourier}, and use the presentation of $D_n$ in \eqref{eq:presentation}. Let $\omega=e^{2\pi i/n}$. Write
\[x=\sum_{j=0}^{n-1}u_jr^j+\sum_{j=0}^{n-1}v_jr^jb,\]
and let
\[U_k=\sum_{j=0}^{n-1}u_j\omega^{kj}
\quad\text{and}\quad V_k=\sum_{j=0}^{n-1}v_j\omega^{kj}.\] As in the proof of \cref{prop:reflection_inequality}, the $k$-th Fourier block of $x$ is
\[M_k=
\begin{pmatrix}
U_k&V_k\\
\overline{V_k}&U_k
\end{pmatrix}.
\]
We interpret the subscripts on $U_k$, $V_k$, and $M_k$ modulo $n$.

Since $n>2K$, the residues $\pm1$ and $\pm K$ are distinct modulo $n$. For $\epsilon>0$, let $x_\epsilon$ be the element whose Fourier data are given by
\[U_1=U_{-1}=1,\qquad V_1=V_{-1}=1,\]
\[U_K=U_{-K}=\epsilon,\qquad
V_K=i\epsilon,\qquad
V_{-K}=-i\epsilon,\]
\[U_k=V_k=0 \quad\text{for all }k\not\in\{\pm 1,\pm K\}.\] Its nonzero Fourier blocks are
\[M_{\pm 1}=
\begin{pmatrix}
1&1 \\ 
1&1
\end{pmatrix},\qquad
M_K=\epsilon
\begin{pmatrix}
1&i \\
-i&1
\end{pmatrix},\qquad
M_{-K}=\epsilon
\begin{pmatrix}
1&-i \\ 
i&1
\end{pmatrix}.\]
Each of these matrices is positive semidefinite, so $x_\epsilon$ is positive. Using Fourier inversion, we find that 
\[u_j=\frac{2}{n}\left(\cos\left(\frac{2\pi j}{n}\right)+\epsilon\cos\left(\frac{2\pi Kj}{n}\right)\right)\quad\text{and}\quad v_j=\frac{2}{n}\left(\cos\left(\frac{2\pi j}{n}\right)
+\epsilon\sin\left(\frac{2\pi Kj}{n}\right)\right). \]  
Moreover, $U_0=V_0=0$, so $x_\epsilon\pi=0$.

In the $k$-th Fourier block, the reflection $b$ is represented by the matrix   
\[B= \begin{pmatrix}
0&1\\
1&0
\end{pmatrix}. \] 
Let \[P_+=\frac{1}{2}
\begin{pmatrix}
1&1\\
1&1
\end{pmatrix},\qquad
P_i=\frac{1}{2}
\begin{pmatrix}
1&i\\
-i&1
\end{pmatrix},\qquad
P_{-i}=\overline{P_i}.
\]
We have $P_+BP_+=P_+$ and $P_iBP_i=P_{-i}BP_{-i}=0$. Since $M_{\pm 1}=2P_+$, $M_K=2\epsilon P_i$, and $M_{-K}=2\epsilon P_{-i}$, it follows that 
\[M_{\pm 1}^\theta BM_{\pm 1}^{1-\theta}=M_{\pm 1}
\quad\text{and}\quad M_{\pm K}^\theta BM_{\pm K}^{1-\theta}=0\] for all $0<\theta<1$. Therefore, if
\[x_\epsilon^\theta b x_\epsilon^{1-\theta}=\sum_{j=0}^{n-1}q_jr^j+\sum_{j=0}^{n-1}y_jr^jb,\] then \[q_j=y_j=\frac{2}{n}\cos\left(\frac{2\pi j}{n}\right)\] for every $j$. In particular, these coefficients do not depend on $\theta$.

Let \[C=\frac{2}{n},\qquad
c_j=\cos\left(\frac{2\pi j}{n}\right),\qquad d_j=\cos\left(\frac{2\pi Kj}{n}\right),\qquad
s_j=\sin\left(\frac{2\pi Kj}{n}\right). \] We have $u_j=C(c_j+\epsilon d_j)$, $v_j=C(c_j+\epsilon s_j)$, and $q_j=y_j=Cc_j$. It follows that 
\[\mathcal E_p(x_\epsilon;b)=-C^p\epsilon\sum_{j=0}^{n-1}\left(J_p(c_j+\epsilon d_j)d_j+J_p(c_j+\epsilon s_j)s_j\right).\] Because $n$ is odd, we have $c_j\neq 0$ for every $j$. Therefore,
\[\lim_{\epsilon\to 0^+} \frac{\mathcal E_p(x_\epsilon;b)}{\epsilon}=-C^p\sum_{j=0}^{n-1}J_p(c_j)(d_j+s_j). \] The terms indexed by $j$ and $n-j$ cancel in the sine sum, so
\[\sum_{j=0}^{n-1}J_p(c_j)s_j=0. \] 
By \cref{lem:Fourier}, we have \[\sum_{j=0}^{n-1}J_p(c_j)d_j<0, \] so \[\lim_{\epsilon\to 0^+}
\frac{\mathcal E_p(x_\epsilon;b)}{\epsilon}>0.\]
It follows that we can choose $\epsilon>0$ sufficiently small that $\mathcal E_p(x_\epsilon;b)>0$. 

For $\delta>0$, let $x_{\epsilon,\delta}=x_\epsilon+\delta(e-\pi)$.
This element is positive and satisfies $x_{\epsilon,\delta}\pi=0$. Moreover, it is positive definite on the kernel of $\pi$. Using the Dominated Convergence theorem, we find that 
\[\lim_{\delta\to 0^+}\mathcal E_p(x_{\epsilon,\delta};b)=\mathcal E_p(x_\epsilon;b)>0.\]
Hence, we can choose $\delta>0$ such that $\mathcal E_p(x_{\epsilon,\delta};b)>0$.

By \cref{lem:heat_realization}, there exist a symmetric generating rate collection $\blam$ on $D_n$ and a real number $R>0$ such that
\[a_1^{\blam}=e^{-R}x_{\epsilon,\delta}.\]
Because $J_p$ is homogeneous, we have \[\mathcal E_p(a_1^{\blam};b)
=e^{-Rp}\mathcal E_p(x_{\epsilon,\delta};b)>0.\]
For $\alpha\geq 0$, let $\blam^{(\alpha)}$ be obtained from $\blam$ by increasing the rate of $b$ by $\alpha$. Applying \eqref{eq:infinitesimal_converse} shows that
\[
\left.\frac{\mathrm d}{\mathrm d\alpha}\right|_{\alpha=0}\left(d_p^{\blam^{(\alpha)}}(1)\right)^p
=pe^{-Rp}\mathcal E_p(x_{\epsilon,\delta};b)>0. \]
Consequently, for every sufficiently small $\alpha>0$, we have \[
d_p^{\blam^{(\alpha)}}(1)>d_p^{\blam}(1).\] 
Since $\blam\leq\blam^{(\alpha)}$ and both rate collections are symmetric and generating, the pair $(D_n,p)$ is not rate-monotonic. 
\end{proof}

\section{Appendix: AxiomProver produced a formal Certificate}
The proofs in this paper were generated through human-AI collaboration. In dialogue with AI, the human authors developed and formalized \cref{thm:main_forward,thm:main_general,thm:main_converse}\footnote{\cref{thm:main_forward} is a special case of \cref{thm:main_general}.} with AxiomProver, an AI system currently under development by Axiom Math. In particular, this resulted in a formal Lean certificate for these three theorems. This formalization assumes standard facts from analysis and group theory. For explicit details, see 
\begin{center}
\url{https://github.com/AxiomMath/LyonsWhite}
\end{center}
This directory includes a formal challenge file containing these theorems, which can be mechanically verified using the Comparator tool in Lean.

\section*{Acknowledgments}
The authors thank Russell Lyons for introducing them to this topic.

\end{document}